\let\cite\parencite
\begin{document}

\title[Second-order conditions under finitely many nonlinear constraints]{No-gap second-order conditions under $n$-polyhedric constraints and finitely many nonlinear constraints}
\author{%
	Gerd Wachsmuth%
}
\maketitle

\begin{abstract}
	We consider an optimization problem subject to an abstract constraint
	and finitely many nonlinear constraints.
	Using the recently introduced concept of $n$-polyhedricity,
	we are able to provide
	second-order optimality conditions
	under weak regularity assumptions.
	In particular,
	we prove necessary optimality conditions of first and second order
	under the constraint qualification of Robinson, Zowe and Kurcyusz.
	Similarly, sufficient optimality conditions are stated.
	The gap between both conditions is as small as possible.
\end{abstract}

\begin{keywords}
	second order optimality condition,
	critical cone,
	polyhedricity,
	Legendre form,
	non-unique multiplier
\end{keywords}

\begin{msc}
\mscLink{49K27}
\end{msc}

\section{Introduction}
In this work, we are interested in problems of type
\begin{equation*}
	\tag{P}
	\label{eq:prob}
	\begin{aligned}
		\text{Minimize} \quad & f(x) \\
		\text{such that} \quad & x \in C \\
		\text{and} \quad & g_i(x) = 0, \; i = 1,\ldots, m_1, \\
		& g_i(x) \le 0, \; i = m_1+1,\ldots,m.
	\end{aligned}
\end{equation*}
Our goal is the derivation of second-order necessary and sufficient optimality conditions
with minimal gap.
Here, $f \colon X \to \R$ and $g_i \colon X \to \R$, $i = 1,\ldots,m$,
are twice Fréchet differentiable,
$X$ is a Banach space
and $C \subset X$
is assumed to be closed and convex.
In particular, we interested in the situation that
\begin{equation}
	\label{eq:function_space_setting}
	X = L^2(\Omega),
	\quad
	C = \{ x \in L^2(\Omega) \mid x_a \le x \le x_b\},
\end{equation}
where $(\Omega, \Sigma, \nu)$
is a finite measure space
and
$x_a, x_b \colon \Omega \to \R \cup \{\pm\infty\}$ are measurable functions
such that $C$ becomes non-empty.

It is clear that first-order necessary optimality conditions
for \eqref{eq:prob} can be obtained
by using the constraint qualification
of Robinson-Zowe-Kurcyusz (RZKCQ),
see \cite{ZoweKurcyusz1979,Robinson1976:1}.
There are a couple of papers available
in which second-order conditions for problems of type \eqref{eq:prob}
are considered.
We mention exemplarily
\cite{BonnansZidani1999,CasasTroeltzsch2002}.
However, in these papers,
the authors have to impose additional
regularity assumptions to arrive at second-order necessary conditions.
These regularity conditions are rather strong
and, in particular, they imply uniqueness of Lagrange multipliers.
Thus,
these conditions cannot be satisfied in situations
in which the Lagrange multipliers associated
with a stationary point are not unique.
The case of infinite-dimensional equality constraints
is considered in \cite{Ioffe1979}.

In this paper,
we are utilizing
the recently introduced notion of $n$-polyhedricity,
see \cite[Definition~4.3]{Wachsmuth2016:2}
and \cref{subsec:n_polyhedricity} below,
to derive second-order necessary conditions.
Note that the set $C$ from \eqref{eq:function_space_setting}
is $n$-polyhedric for all non-negative integers $n$, see \cite[Example~4.21(1)]{Wachsmuth2016:2}.
Let $\bar x$ be a local optimizer of \eqref{eq:prob}.
Under the RZKCQ,
there exist multipliers
$\lambda \in \NN_C(\bar x)$,
$\mu \in \R^m$
such that
\begin{align*}
	\LL'(\bar x, \lambda, \mu) = 
	f'(\bar x) + \lambda + \sum_{i=1}^m \mu_i \, g_i'(\bar x) &= 0, \\
	0 \le \mu_i,
	\;
	\mu_i \, g_i(\bar x) &= 0, \quad i = m_1+1,\ldots,m
	.
\end{align*}
Here, $\LL$ denotes the Lagrangian
(see \eqref{eq:lagrangian} below)
and
$\NN_C(\bar x)$ is the normal cone of $C$ at the point $\bar x$.
The set of all multipliers satisfying the
above conditions
is denoted by $\Lambda(\bar x)$.
Our main contributions are the following.
We shall show that the condition
\begin{equation}
	\label{eq:SNC_intro}
	\sup_{(\lambda, \mu) \in \Lambda(\bar x)} \LL''(\bar x, \lambda, \mu)\,h^2
	\ge 0
	\qquad\forall h \in \TT_C(\bar x), f'(\bar x) \, h = 0
\end{equation}
is
necessary for local optimality
if RZKCQ is satisfied and if the set $C$ is $n$-polyhedric,
where $n$ is bigger than the number of active constraints.
If, additionally, a quadratic growth condition
is satisfied at $\bar x$,
we can show
\begin{equation*}
	\sup_{(\lambda, \mu) \in \Lambda(\bar x)} \LL''(\bar x, \lambda, \mu)\,h^2
	\ge \alpha \, \norm{h}_X^2
	\qquad\forall h \in \TT_C(\bar x), f'(\bar x) \, h = 0
\end{equation*}
for some $\alpha > 0$.
Under a slight additional assumption,
this last condition is also sufficient for the local optimality of $\bar x$.

The paper is organized as follows.
In \cref{sec:not} we introduce the necessary notation and review some known results.
The well-known first-order optimality conditions are given in \cref{sec:foc}.
The main results of this paper
concerning the second-order conditions for \eqref{eq:prob}
are given in \cref{sec:soc}.
In \cref{sec:examples}
we present two examples which indicate that the results in this paper are sharp.
The first example shows that the supremum in
\eqref{eq:SNC_intro}
is really necessary if the Lagrange multipliers are not unique.
The second example demonstrates
that the $n$-polyhedricity assumption on $C$ is crucial
and cannot be replaced by requiring
polyhedricity only.

\section{Notation, preliminaries and known results}
\label{sec:not}
\subsection{Notation}
We use the definitions
$\N := \{1,\ldots\}$
and
$\N_0 := \{0,1,\ldots\}$.

For a convex subset $D \subset Y$ of a Banach space $Y$
and $v \in D$,
we define
the radial cone,
the tangent cone, 
the normal cone
and the
polar cone
via
\begin{align*}
	\RR_D(v) &:= \bigcup_{\kappa > 0} \kappa (D - v),
	&
	D\polar &:= \set{\nu \in Y\dualspace \given \dual{\nu}{d} \le 0 \; \forall d \in D}
	\\
	\TT_D(v) &:= \overline{\RR_D(v)},
	&
	\NN_D(v) &:= \set{ \nu \in Y\dualspace \given \dual{\nu}{d - v} \le 0 \; \forall d \in D}
	= \TT_D(v)\polar,
\end{align*}
respectively.
The annihilator of a functional $\nu \in Y\dualspace$
is defined as
\begin{equation*}
	\nu\anni := \set{y \in Y \given \dual{\nu}{y} = 0}.
\end{equation*}
For $\varepsilon > 0$ and $y \in Y$, we define the closed ball
\begin{equation*}
	B_\varepsilon(y) := \set[\big]{\hat y \in Y \given \norm{y - \hat y}_Y \le \varepsilon}
	.
\end{equation*}

In order to discuss \eqref{eq:prob},
it will be convenient to define
$K \subset \R^m$ via
\begin{equation*}
	K
	:=
	\set[\big]{
		z \in \R^m
		\given
		z_i = 0, \; i=1,\ldots,m_1,\;
		z_i \le 0, \; i = m_1+1,\ldots,m
	}
	.
\end{equation*}
Moreover, we consider
$g = (g_1,\ldots,g_m)$
as a function from $X$ to $\R^m$.
Let a point $x \in X$ with $g(x) \in K$ be given.
Using the active and inactive sets of indices, defined via
\begin{align*}
	I_0(x) &:= \set[\big]{ i \in \{1,\ldots,m\} \given g_i(x) = 0 }
	,
	&
	I_-(x) &:= \set[\big]{ i \in \{1,\ldots,m\} \given g_i(x) < 0 }
	,
\end{align*}
respectively,
it is easy to check that
\begin{equation}
	\label{eq:R_is_T}
	\TT_K(g(x))
	=
	\RR_K(g(x))
	=
	\set*{
		z \in \R^m \given 
		\begin{aligned}
			z_i &= 0 \;\forall i=1,\ldots,m_1,
			\\
			z_i &\le 0 \; \forall i \in I_0(x) \setminus \{1,\ldots,m_1\}
		\end{aligned}
	}
	.
\end{equation}
Moreover, for a feasible point $\bar x$ of \eqref{eq:prob}
we define the critical cone $\KK(\bar x)$
via
\begin{equation}
	\label{eq:critical_cone}
	\KK(\bar x)
	=
	\set[\big]{ h \in \TT_C(\bar x) \given g'(\bar x) \, h \in \TT_K(g(\bar x)),\; f'(\bar x) \, h \le 0}
	.
\end{equation}

 
\subsection{On \texorpdfstring{$n$}{n}-polyhedricity}
\label{subsec:n_polyhedricity}
As mentioned in the introduction,
we are going to employ the concept of $n$-polyhedricity
to derive second-order conditions for \eqref{eq:prob}.
The notion of $n$-polyhedricity
was recently introduced in \cite{Wachsmuth2016:2}
and generalizes
the well-known notion of polyhedricity
due to \cite{Mignot1976,Haraux1977}.

We recall that a closed convex set $C \subset X$
is called polyhedric at $x \in C$
if
\begin{equation*}
	\cl\paren[\big]{ \RR_C(x) \cap \mu\anni}
	=
	\TT_C(x) \cap \mu\anni
	\qquad\forall \mu \in \NN_C(x).
\end{equation*}
It was shown in \cite[Lemma~4.1]{Wachsmuth2016:2}
that this condition equivalent to
\begin{equation*}
	\cl\paren[\big]{ \RR_C(x) \cap \mu\anni}
	=
	\TT_C(x) \cap \mu\anni
	\qquad\forall \mu \in X\dualspace.
\end{equation*}
The latter condition is amenable
to the following generalization.
We say that $C \subset X$
is $n$-polyhedric at $x \in C$ for some $n \in \N_0$, if
\begin{equation*}
	\TT_C(x) \cap \bigcap_{i = 1}^n \mu_i\anni
	=
	\cl\paren[\Big]{
		\RR_C(x) \cap \bigcap_{i = 1}^n \mu_i\anni
	}
	\qquad
	\forall \mu_1,\ldots,\mu_n \in X\dualspace
\end{equation*}
holds,
see \cite[Definition~4.3]{Wachsmuth2016:2}.
Many sets which were known to be polyhedric
are even $n$-polyhedric for all $n \in \N_0$,
see, e.g., \cite[Example~4.21]{Wachsmuth2016:2}.
In particular, this applies to the set of interest
$C$ from \eqref{eq:function_space_setting}.

We provide a lemma,
which follows from a simple calculation,
see also \cite[Lemma~4.4]{Wachsmuth2016:2}.
\begin{lemma}
	\label{lem:n_polyhedricity}
	Assume that the set $C \subset X$ is $N$-polyhedric
	for some $N \in \N_0$ at $\bar x \in C$.
	Further, let
	$n, n_1 \in \N_0$, $\nu_i \in X\dualspace$
	for $1\le i \le n$
	be given such that
	$N \ge n \ge n_1$.
	Then, the set
	\begin{equation*}
		\set[\Big]{
			h \in \RR_{C}(\bar x)
			\given
			\dual{\nu_i}{h} = 0,\; i = 1,\ldots, n_1,\;
			\dual{\nu_i}{h} \le 0,\; i = n_1+1,\ldots,n
		}
	\end{equation*}
	is dense in
	\begin{equation*}
		\set[\Big]{
			h \in \TT_{C}(\bar x)
			\given
			\dual{\nu_i}{h} = 0,\; i = 1,\ldots, n_1,\;
			\dual{\nu_i}{h} \le 0,\; i = n_1+1,\ldots,n
		}.
	\end{equation*}
\end{lemma}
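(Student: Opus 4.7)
The plan is to reduce the claim to a direct application of the definition of $N$-polyhedricity, by promoting active inequality constraints to equalities and simply discarding the strictly inactive ones. Since each $\nu_i \in X\dualspace$ is a continuous linear functional, strict inequalities are preserved under small perturbations, so the discarded constraints will automatically hold along the approximating sequence once it is close enough to $h$.

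Fix $h$ in the second (target) set and split the inequality indices according to their activity at $h$:
\[
	J := \set[\big]{i \in \{n_1+1,\ldots,n\} \given \dual{\nu_i}{h} = 0}, \qquad J^c := \{n_1+1,\ldots,n\} \setminus J.
\]
Set $I := \{1,\ldots,n_1\} \cup J$, which contains at most $n \le N$ elements, and observe that $h \in \TT_C(\bar x) \cap \bigcap_{i \in I} \nu_i\anni$. Padding the family $(\nu_i)_{i \in I}$ with additional copies of the zero functional (whose annihilator is all of $X$) until a total of $N$ functionals is reached, and then invoking the $N$-polyhedricity of $C$ at $\bar x$, I obtain a sequence $(h_k) \subset \RR_C(\bar x) \cap \bigcap_{i \in I} \nu_i\anni$ with $h_k \to h$ in $X$.

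It remains to check that $h_k$ eventually lies in the first set. By construction $\dual{\nu_i}{h_k} = 0$ for every $i \in I$, so the equality constraints for $i \in \{1,\ldots,n_1\}$ are satisfied and the inequality constraints for $i \in J$ hold with equality. For the remaining indices $i \in J^c$ one has $\dual{\nu_i}{h} < 0$ by the definition of $J$, and continuity of $\nu_i$ combined with $h_k \to h$ forces $\dual{\nu_i}{h_k} < 0$ for all $k$ sufficiently large. Hence $h_k$ lies in the first set once $k$ is large enough, which yields the required density.

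I do not foresee any genuine difficulty: the proof is essentially bookkeeping on top of the $N$-polyhedricity identity. The only subtle point is that this identity is formulated for intersections of annihilators, so the inequality half-spaces must be handled separately, namely converted to equalities when active at $h$ and preserved via continuity of the $\nu_i$ when strictly inactive; this is precisely where the assumption $N \ge n$ (rather than $N \ge n - n_1$) is used, since after the promotion step one may still have as many as $n$ equality constraints to satisfy.
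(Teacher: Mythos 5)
Your proof is correct, and it is essentially the ``simple calculation'' the paper alludes to (the paper gives no proof of its own, only a pointer to the corresponding lemma in the cited reference): split the inequality indices into those active and those strictly inactive at the given tangent direction, apply the $N$-polyhedricity identity to the resulting at most $n \le N$ annihilators (padding with zero functionals), and recover the strictly inactive inequalities along the approximating sequence by continuity of the $\nu_i$. Your closing remark about why $N \ge n$ rather than $N \ge n - n_1$ is needed is also the right observation.
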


\subsection{Review of known results}
We start by reviewing
the results of
\cite{CasasTroeltzsch2002},
see also \cite{CasasTroeltzsch1999}.
In this paper,
the authors studied a problem
very similar to \eqref{eq:prob} with
\eqref{eq:function_space_setting}.
However, they considered the situation in which
the underlying space $X$
is a Lebesgue space $L^\infty(\Omega)$
and their analysis incorporates
the important phenomenon of two-norm discrepancy.
In the situation in which all functions are already
differentiable in $X = L^2(\Omega)$,
the problem of \cite{CasasTroeltzsch2002}
coincides with \eqref{eq:prob}.
The main assumption for deriving second-order necessary conditions
is a regularity assumption on the solution $\bar x$.
For $\varepsilon > 0$,
the $\varepsilon$-inactive set is defined via
\begin{equation*}
	\Omega_\varepsilon :=
	\set{ \omega \in \Omega \given x_a(\omega) + \varepsilon \le \bar x(\omega) \le x_b(\omega) - \varepsilon}
	.
\end{equation*}
With this notation, the regularity condition is given by
\begin{equation}
	\label{eq:regularity_casas_troeltzsch}
	\exists \varepsilon > 0, \{h_j\}_{j \in I_0(\bar x)} \subset L^\infty(\Omega_\varepsilon)
	: \;
	\forall i,j \in I_0(\bar x)
	: \;
	g_i'(\bar u) \, h_j = \delta_{ij}
	.
\end{equation}
Here, we used the notation
\begin{equation*}
	L^\infty(\Omega_\varepsilon)
	=
	\set{h \in L^\infty(\Omega) \given h = 0 \text{ a.e.\ in } \Omega \setminus \Omega_\varepsilon}
	.
\end{equation*}
Under the regularity assumption \eqref{eq:regularity_casas_troeltzsch},
\cite{CasasTroeltzsch2002}
prove the existence of unique multipliers
$\mu \in \R^m$, $\lambda \in L^2(\Omega)$
such that
\begin{subequations}
	\label{eq:KKT}
	\begin{align}
		\label{eq:KKT_1}
		\mu_j &\ge 0 \qquad \forall j = m_1+1,\ldots,m,
		\\
		\label{eq:KKT_4}
		\mu_j &= 0 \qquad \forall j \in I_-(\bar x),
		\\
		\label{eq:KKT_2}
		\lambda &\in \NN_C(\bar x),
		\\
		\label{eq:KKT_3}
		f'(\bar x) + \lambda + \sum_{j = 1}^m \mu_j \, g_j'(\bar x) &= 0
		.
	\end{align}
\end{subequations}
Moreover,
they prove the second-order necessary condition
\begin{equation}
	\label{eq:SNC_casas_troeltzsch}
	f''(\bar x) \, h^2 + \sum_{j = 1}^m \mu_j \, g_j''(\bar x)\,h^2
	\ge0
	\qquad
	\forall
	h \in \KK(\bar x) \cap L^\infty(\Omega)
	.
\end{equation}
The appearance of $L^\infty(\Omega)$
in this formula comes through the general setting
of \cite{CasasTroeltzsch2002}
which includes the two-norm discrepancy.
We mention that also sufficient second-order conditions are derived.

Next, we review the results of \cite{BonnansZidani1999}.
In this work,
a problem slightly more general than \eqref{eq:prob} is considered.
In fact, the nonlinear constraints are replaced by
$G(x) \in K_Y$, where $G$ is twice Fréchet differentiable
and $K_Y$ is a closed convex set in the Banach space $Y$.
However, the strongest results are obtained in the case
that $K_Y$ is a polyhedron, i.e., a finitely intersection of
closed half-spaces,
and this is very similar to \eqref{eq:prob}.
To facilitate the comparison with our results,
we apply their results to our problem \eqref{eq:prob}.
In this case, they use the regularity condition
\begin{equation*}
	0 \in \interior
	\paren[\Big]{
		g'(\bar x) \, \bracks{ (C - \bar x) \cap \lambda\anni}
		-
		\bracks{ (K - g(\bar x)) \cap \mu\anni}
	}
\end{equation*}
for a given KKT multiplier $(\lambda, \mu)$.
Via the generalized open mapping theorem from \cite{ZoweKurcyusz1979},
this condition is equivalent to
\begin{equation}
	\label{eq:regularity_bonnans}
	\R^m
	=
	g'(\bar x) \, \bracks{ \RR_C(\bar x) \cap \lambda\anni}
	-
	\bracks{ \RR_K(\bar x) \cap \mu\anni}
	.
\end{equation}
In the literature,
this condition is often
called
``strict qualification condition''.
To our knowledge,
this condition appears first in \cite[Theorem~3.3]{MaurerZowe1979}.
Moreover, it is known
that this condition implies the uniqueness of the multipliers $(\lambda,\mu)$,
see \cite{Shapiro1997}.
Moreover, it is straightforward to check
that \eqref{eq:regularity_casas_troeltzsch} is strictly stronger
than \eqref{eq:regularity_bonnans}.
Under condition \eqref{eq:regularity_bonnans},
\cite[Theorem~2.7(iii)]{BonnansZidani1999}
gives
the second-order necessary condition
\begin{equation}
	\label{eq:SNC_bonnans_zidani}
	f''(\bar x) \, h^2 + \sum_{j = 1}^m \mu_j \, g_j''(\bar x)\,h^2
	\ge0
	\qquad
	\forall
	h \in \KK(\bar x)
	.
\end{equation}
Under the additional assumption that the second derivative
of the Lagrangian is a Legendre form,
they also derive sufficient conditions.
Consequently, the gap between necessary and sufficient conditions
of second order is as small as possible.

Using the inheritance property \cite[Lemma~3.3]{Wachsmuth2016:2}
of polyhedric sets,
it is possible to generalize the results of
\cite{BonnansZidani1999}
in the following way.
Instead of \eqref{eq:prob},
we consider
the much more general problem
\begin{equation*}
	\begin{aligned}
		\text{Minimize} \quad & f(x) \\
		\text{such that} \quad & x \in C \\
		\text{and} \quad & G(x) \in D.
	\end{aligned}
\end{equation*}
Here, $X$, $Y$ are a Banach spaces,
$f \colon X \to Y$ $G \colon X \to Y$ are
twice Fréchet differentiable
and $C \subset X$, $D \subset Y$ are closed, convex and polyhedric sets.
Given multipliers
$(\lambda,\mu) \in \NN_C(\bar x) \times \NN_D(G(\bar x))$,
the condition \eqref{eq:regularity_bonnans} becomes
\begin{equation}
	\label{eq:sRZKC}
	Y
	=
	G'(\bar x) \, \bracks{ \RR_C(\bar x) \cap \lambda\anni}
	-
	\bracks{ \RR_D(\bar x) \cap \mu\anni}
	.
\end{equation}
Under this condition, we can apply
\cite[Theorem~5.4]{Wachsmuth2016:2}
and obtain the second-order necessary condition
\begin{equation}
	\label{eq:SNC_polyhedric}
	f''(\bar x) \, h^2 + \dual{\mu}{G''(\bar x) \, h^2}
	\ge0
	\qquad
	\forall
	h \in \TT_C(\bar x),
	G'(\bar x) \, h \in \TT_D(G(\bar x)),
	f'(\bar x) \, h = 0
	.
\end{equation}
Note that one has to rewrite the constraints as
$\hat G(x) := (x, G(x)) \in C \times D =: \hat K$
to apply this theorem.
Thus,
if this strong regularity condition \eqref{eq:sRZKC} is satisfied,
we can replace the assumption of $K$
being polyhedral in
\cite{BonnansZidani1999}
by the much weaker assumption
of polyhedricity.
We note that also necessary conditions of second order can be found in
\cite[Theorems~5.6, 5.7]{Wachsmuth2016:2}.

\section{First-order optimality conditions and constraint qualifications}
\label{sec:foc}
In this section, we briefly recall first-order optimality conditions
for the problem \eqref{eq:prob}
and the constraint qualifications which are required for the derivation.
In order to put our problem into the framework of \cite{ZoweKurcyusz1979},
we recall
\begin{equation*}
	K
	=
	\{
		z \in \R^m \mid
		z_i = 0, \; i=1,\ldots,m_1,\;
		z_i \le 0, \; i = m_1+1,\ldots,m
	\}
\end{equation*}
and
$g = (g_1,\ldots,g_m)$.
Now, our problem \eqref{eq:prob}
reads
\begin{equation}
	\label{eq:prob2}
	\text{Minimize } f(x)
	\quad
	\text{subject to $x \in C$ and $g(x) \in K$}.
\end{equation}
An application of
\cite[Theorem~3.1]{ZoweKurcyusz1979}
implies the following first-order necessary conditions.
\begin{theorem}
	\label{thm:fonc}
	Assume that $\bar x \in X$ is a local minimizer of \eqref{eq:prob}
	such that
	\begin{equation}
		\label{eq:rzcq}
		\tag{RZKCQ}
		g'(\bar x) \, \RR_C(\bar x)
		- \RR_K(g(\bar x))
		=
		\R^m
	\end{equation}
	is satisfied.
	Then, there exist
	$\lambda \in \NN_C(\bar x)$,
	$\mu \in \NN_K(g(\bar x))$
	such that
	\begin{equation}
		\label{eq:fonc}
		f'(\bar x) + \lambda + g'(\bar x)\adjoint \mu = 0.
	\end{equation}
\end{theorem}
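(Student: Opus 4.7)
The plan is to deduce this theorem as an immediate consequence of \cite[Theorem~3.1]{ZoweKurcyusz1979}. That result treats the abstract problem of minimizing $f(x)$ over $x \in C$ subject to a conic inclusion $g(x) \in K$, with $C$ closed convex in a Banach space $X$, $K$ a closed convex cone in a Banach space $Y$, and $f, g$ Fréchet differentiable at $\bar x$. Our problem \eqref{eq:prob} has already been recast in this form as \eqref{eq:prob2}, with $Y = \R^m$ and $K$ the polyhedral convex cone introduced in \cref{sec:not}, so the abstract framework applies verbatim.

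Next, I would observe that the constraint qualification appearing in \cite{ZoweKurcyusz1979} is literally our \eqref{eq:rzcq}: both require
\begin{equation*}
	g'(\bar x)\,\RR_C(\bar x) - \RR_K(g(\bar x)) = \R^m.
\end{equation*}
Under this assumption, \cite[Theorem~3.1]{ZoweKurcyusz1979} furnishes multipliers $\lambda \in \NN_C(\bar x)$ and $\mu \in \NN_K(g(\bar x))$ satisfying the stationarity identity \eqref{eq:fonc}. The familiar sign and complementarity properties of $\mu$ that are implicit in ``$\mu \in \NN_K(g(\bar x))$'' then drop out from the explicit description of this normal cone: polarizing \eqref{eq:R_is_T} and using $\NN_K(g(\bar x)) = \TT_K(g(\bar x))\polar$ yields $\mu_i$ free for $i = 1,\ldots,m_1$, $\mu_i \ge 0$ for $i \in I_0(\bar x)\setminus\{1,\ldots,m_1\}$, and $\mu_i = 0$ for $i \in I_-(\bar x)$, matching the multiplier set $\Lambda(\bar x)$ described in the introduction.

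There is essentially no obstacle in this argument; the whole statement amounts to transcribing a published theorem into the notation of the present paper. The only piece of work is the bookkeeping identification of $\NN_K(g(\bar x))$ for the polyhedral set $K$, and this reduces to an elementary finite-dimensional computation based on \eqref{eq:R_is_T}. If one wished to avoid citing \cite{ZoweKurcyusz1979}, an alternative route would be to apply the generalized open mapping theorem to the linearization of the feasibility map at $\bar x$ so as to produce admissible perturbations along every direction $h$ with $h \in \TT_C(\bar x)$ and $g'(\bar x)h \in \TT_K(g(\bar x))$, use local optimality to obtain the primal variational inequality $f'(\bar x)h \ge 0$ on this cone, and then separate via Hahn--Banach to recover $(\lambda,\mu)$; but given the availability of the direct reference this is unnecessary.
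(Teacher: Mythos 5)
Your proposal matches the paper exactly: the paper offers no separate proof of \cref{thm:fonc} and simply states it as a direct application of \cite[Theorem~3.1]{ZoweKurcyusz1979} to the reformulation \eqref{eq:prob2}, which is precisely your argument, including the identification of $\NN_K(g(\bar x))$ from \eqref{eq:R_is_T}. Your sign bookkeeping ($\mu_i \ge 0$ on active inequality indices) agrees with the multiplier set $\Lambda(\bar x)$ as described in the introduction, so nothing is missing.
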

It is clear that
$\mu \in \NN_K(g(\bar x))$
is equivalent to
\begin{equation*}
	\mu_i \le 0
	\quad\text{and}\quad
	\mu_i \, g_i(\bar x) = 0
	\qquad\forall i = m_1+1,\ldots, m
	.
\end{equation*}
Further, condition \eqref{eq:fonc} can be written concisely as
\begin{equation*}
	\LL'(\bar x, \lambda, \mu) = 0,
\end{equation*}
where
the Lagrangian $\LL \colon X \times X\dualspace \times \R^m \to \R$
is defined via
\begin{equation}
	\label{eq:lagrangian}
	\LL(x, \lambda, \mu)
	=
	f(x) + \dual{\lambda}{x} + \sum_{i = 1}^m \mu_i \, g_i(x)
\end{equation}
and
a prime denotes partial differentiation w.r.t.\ $x$.
For convenience, we recall the expressions for the first and second derivative of
the Lagrangian $\LL$
w.r.t.\ $x$
\begin{align*}
	\LL'(x, \lambda, \mu)
	&=
	f'(x)
	+
	\lambda
	+
	\sum_{i=1}^m \mu_i \, g_i'(x)
	\\
	\LL''(x, \lambda, \mu) \, h^2
	&=
	f''(x) \, h^2
	+
	\sum_{i=1}^m \mu_i \, g_i''(x) \, h^2
\end{align*}
for $h \in X$.
Here, we used the
common abbreviation $h^2$ for the action of a bilinear form
on the tuple $[h,h]$.

For an arbitrary feasible point $x$, we define the set of Lagrange multipliers via
\begin{equation}
	\label{eq:mult}
	\Lambda(x)
	:=
	\set[\big]{
		(\lambda, \mu) \in \NN_C(x) \times \NN_K(g(x))
		\given
		\LL'(x, \lambda, \mu) = 0
	}
	.
\end{equation}
We also recall from
\cite[Theorem~4.1]{ZoweKurcyusz1979}
that \eqref{eq:rzcq}
implies the boundedness of $\Lambda(\bar x)$.
We mentioned that the boundedness of $\Lambda(\bar x)$
can be shown under the slightly weaker condition
\begin{equation}
	\label{eq:weaker_rzcq}
	g'(\bar x) \, \TT_C(\bar x)
	- \TT_K(g(\bar x))
	=
	\R^m
\end{equation}
by a suitable modification of
the proof of
\cite[Theorem~4.1]{ZoweKurcyusz1979}.
Note that, however, $\Lambda(\bar x)$
might be empty if only \eqref{eq:weaker_rzcq} is satisfied.

\section{No-gap second-order optimality conditions}
\label{sec:soc}
In this section, we consider
second-order optimality conditions for problem \eqref{eq:prob}.

We begin by the derivation of necessary optimality conditions.
In order to apply the results from
\cite[Section~3.2.3]{BonnansShapiro2000},
we introduce
\begin{equation*}
	G(x) := (x, g(x)),
	\quad
	\hat K := C \times K.
\end{equation*}
Now,
\eqref{eq:prob}
reads
\begin{equation*}
	\begin{aligned}
		\text{Minimize}\quad & f(x) \\
		\text{such that} \quad & G(x) \in \hat K.
	\end{aligned}
\end{equation*}
Let $\bar x$ be a feasible point of \eqref{eq:prob}.
From
\cite[(3.20) and (3.122)]{BonnansShapiro2000},
we recall the definition of
the critical cone
\begin{equation}
	\label{eq:critical}
	\begin{aligned}
		\KK(\bar x)
		&:=
		\set[\big]{ h \in X \given G'(\bar x) \, h \in \TT_{\hat K}(G(\bar x)), \; f'(\bar x) \, h \le 0 }
		\\
		&=
		\set[\big]{ h \in \TT_C(\bar x) \given g'(\bar x) \, h \in \TT_K(g(\bar x)), \; f'(\bar x) \, h \le 0 }
	\end{aligned},
\end{equation}
which matches our definition \eqref{eq:critical_cone},
and of the set of radial critical directions
\begin{align*}
	\KK_R(\bar x)
	&:=
	\set[\big]{ h \in X \given G'(\bar x) \, h \in \RR_{\hat K}(G(\bar x)), \; f'(\bar x) \, h \le 0 }
	\\
	&=
	\set[\big]{ h \in \RR_C(\bar x) \given g'(\bar x) \, h \in \RR_K(g(\bar x)), \; f'(\bar x) \, h \le 0 }.
\end{align*}
Note that
we have
\begin{equation*}
	\KK_R(\bar x)
	=
	\set[\big]{ h \in \RR_C(\bar x) \given g'(\bar x) \, h \in \TT_K(g(\bar x)), \; f'(\bar x) \, h \le 0 }
\end{equation*}
due to \eqref{eq:R_is_T}.

From \cite[Proposition~3.53]{BonnansShapiro2000}
we get the following result.
\begin{lemma}
	\label{lem:necessary_condition}
	Assume that $\bar x$ is a local minimizer of \eqref{eq:prob}
	such that \eqref{eq:rzcq} is satisfied.
	Further suppose that
	$\KK_R(\bar x)$ is dense in $\KK(\bar x)$.
	Then,
	\begin{equation*}
		\sup_{(\lambda, \mu) \in \Lambda(\bar x)} \LL''(\bar x, \lambda, \mu) \, h^2 \ge 0
		\qquad
		\forall h \in \KK(\bar x).
	\end{equation*}
\end{lemma}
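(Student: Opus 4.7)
My plan is to reduce the claim to \cite[Proposition~3.53]{BonnansShapiro2000} applied to the reformulated problem $G(x) := (x,g(x)) \in \hat K := C \times K$ described just before the lemma, and then to propagate the conclusion from $\KK_R(\bar x)$ to all of $\KK(\bar x)$ by density.

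First I would check that \eqref{eq:rzcq} coincides with Robinson's constraint qualification for the reformulated problem; this follows directly from the product structure of $\hat K$ and the identity block in $G'$, and the multiplier set produced by Bonnans--Shapiro's framework coincides with $\Lambda(\bar x)$ from \eqref{eq:mult}. Under this CQ, \cite[Proposition~3.53]{BonnansShapiro2000} gives a no-gap second-order necessary condition whose right-hand side involves the support functional of the inner second-order tangent set of $\hat K$ at $G(\bar x)$ in the direction $G'(\bar x) h$. For a radial critical direction $h \in \KK_R(\bar x)$, the image $G'(\bar x) h$ lies in $\RR_{\hat K}(G(\bar x))$, so the inner second-order tangent set contains $0$ and the support functional is non-positive. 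Consequently, Bonnans--Shapiro's inequality reduces to
\begin{equation*}
	\sup_{(\lambda,\mu) \in \Lambda(\bar x)} \LL''(\bar x,\lambda,\mu) \, h^2 \ge 0 \qquad \forall h \in \KK_R(\bar x).
\end{equation*}

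To extend this to $h \in \KK(\bar x)$, I would pick $h_n \in \KK_R(\bar x)$ with $h_n \to h$ (possible by hypothesis) and, for each $n$, a near-optimal multiplier $(\lambda_n,\mu_n) \in \Lambda(\bar x)$ with $\LL''(\bar x,\lambda_n,\mu_n) \, h_n^2 \ge -1/n$. Since \eqref{eq:rzcq} implies boundedness of $\Lambda(\bar x)$ by \cite[Theorem~4.1]{ZoweKurcyusz1979}, and since $\mu_n$ lives in the finite-dimensional space $\R^m$, I can pass to a subsequence with $\mu_{n_k} \to \mu^*$. The first-order identity \eqref{eq:fonc} then forces the corresponding $\lambda_{n_k}$ to converge in norm to $\lambda^* := -f'(\bar x) - \sum_i \mu^*_i g_i'(\bar x)$, and closedness of the normal cones puts $(\lambda^*,\mu^*) \in \Lambda(\bar x)$. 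Joint continuity of $(\mu,h) \mapsto \LL''(\bar x,\lambda,\mu) \, h^2$ gives $\LL''(\bar x,\lambda^*,\mu^*) \, h^2 \ge 0$, so the supremum over $\Lambda(\bar x)$ at $h$ is nonnegative.

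The main obstacle I expect is precisely this density-extension step: the near-optimal multiplier depends on $h_n$, and a naive lower-semicontinuity argument on the supremum of continuous quadratic forms goes in the wrong direction. What rescues the argument is that the $\mu$-component of $\Lambda(\bar x)$ is bounded in the finite-dimensional space $\R^m$, so ordinary sequential compactness replaces weak-$\ast$ compactness, and the stationarity equation then transfers convergence to the $\lambda$-component. The only other point requiring care is verifying that the sigma term in \cite[Proposition~3.53]{BonnansShapiro2000} truly vanishes along radial directions in the product setting, which should be routine via the identification $\RR_{\hat K}(G(\bar x)) = \RR_C(\bar x) \times \RR_K(g(\bar x))$.
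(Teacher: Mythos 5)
Your proposal is correct and takes essentially the same route as the paper: the paper's entire proof is a direct appeal to \cite[Proposition~3.53]{BonnansShapiro2000} applied to the product reformulation $G(x)=(x,g(x))\in \hat K = C\times K$, which is exactly the reduction you set up, with matching CQ and multiplier set. Your further unpacking of that proposition --- the sigma term vanishes for radial critical directions since $G'(\bar x)h\in\RR_{\hat K}(G(\bar x))$ puts $0$ in the inner second-order tangent set, and the extension from $\KK_R(\bar x)$ to $\KK(\bar x)$ via near-optimal multipliers, boundedness of $\Lambda(\bar x)$ and compactness of the $\mu$-component in $\R^m$ --- is sound and consistent with how that cited result is proved.
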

The density assumption in this result
can be shown
under an additional condition on the constraint set $C$.
\begin{theorem}
	\label{thm:SNC}
	Assume that $\bar x$ is a local minimizer of \eqref{eq:prob}
	such that \eqref{eq:rzcq} is satisfied.
	We denote by $\hat m$ the number of active constraints in $\bar x$,
	i.e., the number of indices $i = 1,\ldots,m$ with $g_i(\bar x) = 0$.
	Under the assumption that
	$C$
	is $(\hat m + 1)$-polyhedric,
	we have
	\begin{equation}
		\label{eq:snc}
		\sup_{(\lambda, \mu) \in \Lambda(\bar x)} \LL''(\bar x, \lambda, \mu) \, h^2 \ge 0
		\qquad
		\forall h \in \KK(\bar x).
	\end{equation}
\end{theorem}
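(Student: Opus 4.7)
The plan is to reduce the claim to Lemma \ref{lem:necessary_condition}: once $\KK_R(\bar x)$ is known to be dense in $\KK(\bar x)$, the asserted second-order necessary condition follows directly. The density in turn should be a clean application of Lemma \ref{lem:n_polyhedricity} to the list of linear functionals that cut out the critical cone.

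First, I would rewrite both critical cones using the identity $\TT_K(g(\bar x)) = \RR_K(g(\bar x))$ from \eqref{eq:R_is_T}, so that $\KK_R(\bar x)$ and $\KK(\bar x)$ have identical linear side-conditions and differ only in whether $h$ is required to lie in $\RR_C(\bar x)$ or in $\TT_C(\bar x)$. The side-conditions on $h$ decompose into $\dual{g_i'(\bar x)}{h} = 0$ for $i = 1, \ldots, m_1$, $\dual{g_i'(\bar x)}{h} \le 0$ for $i \in I_0(\bar x) \setminus \{1,\ldots,m_1\}$, and the single inequality $\dual{f'(\bar x)}{h} \le 0$. This amounts to $m_1$ equality constraints and $\hat m - m_1 + 1$ inequality constraints, for a total of $\hat m + 1$ functionals from $X\dualspace$.

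I would then invoke Lemma \ref{lem:n_polyhedricity} with $N = n = \hat m + 1$, $n_1 = m_1$, and the $\hat m + 1$ functionals $g_i'(\bar x)$ for $i \in I_0(\bar x)$ together with $f'(\bar x)$. Its conclusion is precisely that $\KK_R(\bar x)$ is dense in $\KK(\bar x)$, and Lemma \ref{lem:necessary_condition} then finishes the proof.

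The main obstacle is essentially the bookkeeping in the second step, namely making sure that the number of functionals really does not exceed $\hat m + 1$ and that the equality/inequality split matches the form prescribed by Lemma \ref{lem:n_polyhedricity}, so that the hypothesis of $(\hat m + 1)$-polyhedricity is both sharp and sufficient. There is no deeper analytic difficulty, since the heavy lifting has been done by the $n$-polyhedricity framework of \cref{subsec:n_polyhedricity} and by the abstract second-order result imported via Lemma \ref{lem:necessary_condition}.
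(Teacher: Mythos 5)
Your proposal is correct and follows essentially the same route as the paper's own proof: rewrite $\KK(\bar x)$ and $\KK_R(\bar x)$ via \eqref{eq:R_is_T} as the tangent (resp.\ radial) cone of $C$ intersected with $m_1$ equalities and $\hat m - m_1 + 1$ inequalities given by the $\hat m + 1$ functionals $g_i'(\bar x)$, $i \in I_0(\bar x)$, and $f'(\bar x)$, then apply \cref{lem:n_polyhedricity} to get density of $\KK_R(\bar x)$ in $\KK(\bar x)$ and conclude with \cref{lem:necessary_condition}. The bookkeeping you flag as the main obstacle is exactly what the paper does, and your count of $\hat m + 1$ functionals with the split $n_1 = m_1$ is the correct one.
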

\begin{proof}
	We recall the formula
	\begin{equation*}
		\TT_K(g(\bar x))
		=
		\set*{
			z \in \R^m \given 
				z_i = 0 \;\forall i=1,\ldots,m_1,\;
				z_i \le 0 \; \forall i \in I_0(\bar x) \setminus \{1,\ldots,m_1\}
		}
		.
	\end{equation*}
	for the tangent cone of $K$,
	where
	$I_0(\bar x)$
	denotes the set of active indices.
	For brevity, we set
	$\hat I_0(\bar x) := I_0(\bar x) \setminus \{1,\ldots,m_1\}$.
	Thus,
	\begin{align*}
		\KK(\bar x)
		&=
		\set{
			h \in \TT_C(\bar x)
			\given
			g_i'(\bar x) \, h = 0, i = 1,\ldots,m_1, \;
			g_i'(\bar x) \, h \le 0, i \in \hat I_0(\bar x), \;
			f'(\bar x) \, h \le 0
		}
		,
		\\
		\KK_R(\bar x)
		&=
		\set{
			h \in \RR_C(\bar x)
			\given
			g_i'(\bar x) \, h = 0, i = 1,\ldots,m_1, \;
			g_i'(\bar x) \, h \le 0, i \in \hat I_0(\bar x), \;
			f'(\bar x) \, h \le 0
		}
		.
	\end{align*}
	In these sets, we have $\hat m+1$ many scalar equalities and inequalities.
	Due to the assumption that $C$ is $(\hat m + 1)$-polyhedric,
	we can invoke
	\cref{lem:n_polyhedricity}.
	This implies that $\KK_R(\bar x)$ is dense in $\KK(\bar x)$.
	Thus, the assertion follows from \cref{lem:necessary_condition}.
\end{proof}
Note that the supremum
in the above inequality is attained,
since the set of multipliers is weak-$\star$
compact, see \cite[Theorem~3.9]{BonnansShapiro2000},
and the second derivative of the Lagrangian
is weak-$\star$ continuous w.r.t.\ the multipliers.
Hence, \eqref{eq:snc} can be rephrased as follows.
For every critical direction $h \in \KK(\bar x)$,
there exist multipliers $(\lambda, \mu) \in \Lambda(\bar x)$
such that $\LL''(\bar x, \lambda, \mu) \, h^2 \ge 0$.

If a quadratic growth condition is satisfied at $\bar x$,
we get a better inequality.
\begin{corollary}
	\label{cor:SNC_growth}
	Additionally to the assumptions of \cref{thm:SNC},
	we assume that the growth condition
	\begin{equation}
		\label{eq:growth}
		f(x)
		\ge
		f(\bar x)
		+
		\frac\alpha2\,\norm{x - \bar x}_X^2
		\qquad
		\forall x \in F \cap B_\varepsilon(\bar x)
	\end{equation}
	is satisfied for some $\alpha,\varepsilon > 0$ at $\bar x$,
	where
	$F = \set{x \in C \given g(x) \in K}$
	is the feasible set of \eqref{eq:prob}.
	Then,
	\begin{equation*}
		\sup_{(\lambda, \mu) \in \Lambda(\bar x)} \LL''(\bar x, \lambda, \mu) \, h^2 \ge
		\alpha\,\norm{h}_X^2
		\qquad
		\forall h \in \KK(\bar x).
	\end{equation*}
\end{corollary}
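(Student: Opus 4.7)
The plan is to reduce to \cref{thm:SNC} via the standard perturbation trick: absorb the quadratic growth into the objective and reapply the theorem to the perturbed problem. Concretely, define
\[
	\tilde f(x) := f(x) - \frac{\alpha}{2}\,\norm{x - \bar x}_X^2.
\]
In the Hilbert-space setting $X = L^2(\Omega)$ of principal interest, $\tilde f$ is twice Fréchet differentiable with $\tilde f'(\bar x) = f'(\bar x)$ and $\tilde f''(\bar x)\,h^2 = f''(\bar x)\,h^2 - \alpha\,\norm{h}_X^2$. The growth condition \eqref{eq:growth} immediately yields
\[
	\tilde f(x) = f(x) - \tfrac{\alpha}{2}\,\norm{x - \bar x}_X^2 \ge f(\bar x) = \tilde f(\bar x) \qquad \forall x \in F \cap B_\varepsilon(\bar x),
\]
so $\bar x$ is a local minimizer of the perturbed problem obtained by replacing $f$ with $\tilde f$ in \eqref{eq:prob}.

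Next I would observe that none of the hypotheses of \cref{thm:SNC} involve the objective: \eqref{eq:rzcq}, the number $\hat m$ of active constraints, and the $(\hat m+1)$-polyhedricity of $C$ are intrinsic to the constraint data. Since $\tilde f'(\bar x) = f'(\bar x)$, both the critical cone $\KK(\bar x)$ and the Lagrange multiplier set $\Lambda(\bar x)$ coincide for the two problems, while the Lagrangian of the perturbed problem satisfies $\tilde\LL''(\bar x,\lambda,\mu)\,h^2 = \LL''(\bar x,\lambda,\mu)\,h^2 - \alpha\,\norm{h}_X^2$. Applying \cref{thm:SNC} to the perturbed problem therefore gives
\[
	\sup_{(\lambda,\mu) \in \Lambda(\bar x)} \bigl( \LL''(\bar x,\lambda,\mu)\,h^2 - \alpha\,\norm{h}_X^2 \bigr) \ge 0 \qquad \forall h \in \KK(\bar x),
\]
and since the subtracted term is independent of $(\lambda,\mu)$, it factors out of the supremum to yield the claim.

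The only delicate point is the twice Fréchet differentiability of $x \mapsto \norm{x - \bar x}_X^2$, which may fail in a general Banach space. For the main application $X = L^2(\Omega)$ this is automatic. More generally, one can bypass the perturbation entirely by inspecting the proof of \cref{lem:necessary_condition} (i.e.\ \cite[Proposition~3.53]{BonnansShapiro2000}): the quadratic growth contributes an additional $\alpha\,\norm{h}_X^2$ on the right-hand side of the Taylor-type lower bound obtained by plugging feasible curves tangent to $h$ into the inequality $f(x) \ge f(\bar x) + \tfrac{\alpha}{2}\norm{x - \bar x}_X^2$, and the density argument from $(\hat m+1)$-polyhedricity used in the proof of \cref{thm:SNC} then transports this extra term to arbitrary $h \in \KK(\bar x)$.
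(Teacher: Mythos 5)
Your proposal is correct and follows essentially the same route as the paper: perturb the objective to $f(x)-\frac{\alpha}{2}\norm{x-\bar x}_X^2$, observe that $\bar x$ becomes a local minimizer with unchanged critical cone and multiplier set, and apply \cref{thm:SNC}. Your closing remark about the non-Hilbert case matches the paper's own resolution, which explicitly verifies that $x\mapsto\frac12\norm{x-\bar x}_X^2$ admits the second-order Taylor-type expansion along parabolic paths needed in \cite[Proposition~3.53]{BonnansShapiro2000}, so full twice Fréchet differentiability of the norm is not required.
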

\begin{proof}
	Under \eqref{eq:growth}, $\bar x$
	is a local minimizer of
	$\hat f(x) := f(x) - \frac\alpha2 \, \norm{x - \bar x}_X^2$
	on $F$.
	Note that $\hat f$ is twice Fréchet differentiable if $X$ is a Hilbert space.
	In this case, a direct
	application of \cref{thm:SNC}
	yields the claim.
	If $X$ is not a Hilbert space,
	we can still reproduce
	\cite[Lemma~3.44]{BonnansShapiro2000},
	which is enough to prove
	\cite[Prop.~3.53]{BonnansShapiro2000}
	and, consequently, \cref{thm:SNC}.
	To this end, we set $\tilde f(x) := \frac12 \, \norm{x - \bar x}_X^2$
	and check that a second-order Taylor expansion
	similar to
	\cite[(3.100)]{BonnansShapiro2000}
	holds.
	To this end, let $h,w \in X$ and $r : (0,\infty) \to X$
	be given such that $r(t) = \oo(t^2)$.
	We define the path $x(t) := \bar x + t \, h + \frac12 \, t^2 \, w + r(t)$.
	Then,
	\begin{align*}
		\abs[\big]{
			\tilde f(x(t)) - \tilde f(\bar x) - \frac12 \, t^2 \, \norm{h}_X^2
		}
		&=
		\frac12\,\abs[\bigg]{
			\norm[\Big]{t \, h + \frac12 \, t^2 \, w + r(t)}_X^2 - \norm{t \, h}_X^2
		}
		\\
		&\le
		C \, t
		\,
		\abs[\bigg]{
			\norm[\Big]{t \, h + \frac12 \, t^2 \, w + r(t)}_X - \norm{t \, h}_X
		}
		\\
		&\le
		C \, t
		\,
		\abs[\bigg]{
			\norm[\Big]{\frac12 \, t^2 \, w + r(t)}_X
		}
		= \oo(t^2)
	\end{align*}
	as $t \searrow 0$.
	Hence, the modified function $\tilde f$ satisfies
	the required second-order Taylor expansion.
\end{proof}

As usual, second-order sufficient conditions
can be derived by a contradiction argument.
\begin{theorem}
	\label{thm:sufficient_condition}
	Assume that $\bar x$ is a stationary point of \eqref{eq:prob}, i.e.,
	there exist $(\lambda, \mu) \in \Lambda(\bar x)$.
	Further, we suppose that the CQ
	\eqref{eq:weaker_rzcq},
	which is slightly weaker than Robinson's CQ,
	be satisfied.
	We assume that
	\begin{equation}
		\label{eq:SSC}
		\sup_{(\lambda, \mu) \in \Lambda(\bar x)} \LL''(\bar x, \lambda, \mu) \, h^2
		\ge
		\alpha \, \norm{h}_X^2
		\qquad
		\forall h \in \KK_\eta(\bar x)
	\end{equation}
	holds for some $\alpha, \eta > 0$,
	where the extended critical cone $\KK_\eta(\bar x)$ is given by
	\begin{equation*}
		\KK_\eta(\bar x)
		:=
		\set[\Big]{
			h \in \TT_C(\bar x)
			\given
			g'(\bar x) \, h \in \TT_K(g(\bar x))
			\;\text{and}\;
			f'(\bar x) \, h \le \eta \, \norm{h}_X
		}.
	\end{equation*}
	Then, for all $\tilde\alpha \in (0, \alpha)$, there is $\varepsilon > 0$
	such that
	\begin{equation*}
		f(x) \ge f(\bar x) + \frac{\tilde\alpha}{2} \, \norm{x - \bar x}_X^2
		\qquad\forall x \in F \cap B_\varepsilon(\bar x),
	\end{equation*}
	where
	$F = \set{x \in C \given g(x) \in K }$
	is the feasible set of \eqref{eq:prob}.
\end{theorem}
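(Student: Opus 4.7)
The plan is to argue by contradiction in the familiar second-order fashion: assume there is $\tilde\alpha \in (0,\alpha)$ for which no $\varepsilon$ works, so we find feasible $x_k \to \bar x$ with $f(x_k) < f(\bar x) + \frac{\tilde\alpha}{2}\,\norm{x_k - \bar x}_X^2$. Set $t_k := \norm{x_k - \bar x}_X \to 0$ and $h_k := (x_k - \bar x)/t_k$, so $\norm{h_k}_X = 1$. The goal is to show that $h_k$ (after a small correction) lies in $\KK_\eta(\bar x)$ for $k$ large, and that $\sup_{(\lambda,\mu)\in\Lambda(\bar x)} \LL''(\bar x,\lambda,\mu)\,h_k^2$ is asymptotically bounded above by $\tilde\alpha$, which will contradict \eqref{eq:SSC}.

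For the upper bound on the sup, pick any $(\lambda,\mu) \in \Lambda(\bar x)$, which exists by assumption. From $\lambda \in \NN_C(\bar x)$ and $x_k \in C$ we get $\dual{\lambda}{x_k - \bar x} \le 0$, and from $\mu \in \NN_K(g(\bar x))$ together with $g(x_k) \in K$ and complementarity we get $\mu^\top g(x_k) \le 0 = \mu^\top g(\bar x)$. Hence
\begin{equation*}
	\LL(x_k,\lambda,\mu) - \LL(\bar x,\lambda,\mu) \le f(x_k) - f(\bar x) < \tfrac{\tilde\alpha}{2}\,t_k^2.
\end{equation*}
A second-order Taylor expansion of $\LL$ at $\bar x$, combined with $\LL'(\bar x,\lambda,\mu)=0$, then yields $\LL''(\bar x,\lambda,\mu)\,h_k^2 \le \tilde\alpha + o(1)$, where the $o(1)$ depends on $(\lambda,\mu)$ only through $\norm{\lambda}$ and $\abs{\mu}$. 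Since \eqref{eq:weaker_rzcq} forces $\Lambda(\bar x)$ to be bounded, this error is uniform, so $\sup_{(\lambda,\mu)\in\Lambda(\bar x)} \LL''(\bar x,\lambda,\mu)\,h_k^2 \le \tilde\alpha + o(1)$. The same first-order argument, reading the Lagrange identity $f'(\bar x) = -\lambda - g'(\bar x)^* \mu$, gives $f'(\bar x)\,h_k = O(t_k) \to 0$.

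The main obstacle is that $h_k$ only \emph{approximately} satisfies the constraint $g'(\bar x)\,h \in \TT_K(g(\bar x))$: from $g(x_k) \in K$ one only has $\frac{1}{t_k}(g(x_k) - g(\bar x)) \in \RR_K(g(\bar x)) = \TT_K(g(\bar x))$, and $g'(\bar x)\,h_k$ differs from this vector by an error $\varepsilon_k = O(t_k)$. To fix this I invoke the weaker constraint qualification \eqref{eq:weaker_rzcq}: the surjectivity of $(u,z) \mapsto g'(\bar x)\,u - z$ from $\TT_C(\bar x) \times \TT_K(g(\bar x))$ onto $\R^m$ together with the generalized open mapping theorem yields a constant $\gamma>0$ and a decomposition $-\varepsilon_k = g'(\bar x)\,u_k - z_k$ with $u_k \in \TT_C(\bar x)$, $z_k \in \TT_K(g(\bar x))$ and $\norm{u_k}_X + \abs{z_k} \le \gamma\,\abs{\varepsilon_k} \to 0$. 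Setting $\tilde h_k := h_k + u_k$, the sum lies in the convex cone $\TT_C(\bar x)$, and
\begin{equation*}
	g'(\bar x)\,\tilde h_k = \tfrac{1}{t_k}\bigl(g(x_k) - g(\bar x)\bigr) + z_k \in \TT_K(g(\bar x)).
\end{equation*}
Moreover $f'(\bar x)\,\tilde h_k = f'(\bar x)\,h_k + O(\norm{u_k}) \to 0$, so $f'(\bar x)\,\tilde h_k \le \eta\,\norm{\tilde h_k}_X$ for $k$ large, i.e., $\tilde h_k \in \KK_\eta(\bar x)$.

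Finally I apply \eqref{eq:SSC} to $\tilde h_k$ and use bilinearity of $\LL''(\bar x,\lambda,\mu)$ together with the uniform bound on $\Lambda(\bar x)$ to transfer the upper bound from $h_k$ to $\tilde h_k$, namely $\LL''(\bar x,\lambda,\mu)\,\tilde h_k^2 = \LL''(\bar x,\lambda,\mu)\,h_k^2 + o(1)$ uniformly in $(\lambda,\mu) \in \Lambda(\bar x)$. Since $\norm{\tilde h_k}_X \to 1$, comparing
\begin{equation*}
	\alpha\,(1+o(1)) = \alpha\,\norm{\tilde h_k}_X^2 \le \sup_{(\lambda,\mu)\in\Lambda(\bar x)} \LL''(\bar x,\lambda,\mu)\,\tilde h_k^2 \le \tilde\alpha + o(1)
\end{equation*}
and letting $k\to\infty$ gives $\alpha \le \tilde\alpha$, contradicting $\tilde\alpha < \alpha$. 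The only delicate points are the uniformity in the multipliers (handled by boundedness of $\Lambda(\bar x)$ under \eqref{eq:weaker_rzcq}) and the open-mapping perturbation $h_k \leadsto \tilde h_k$ that realizes exact feasibility of the linearized constraints; everything else is Taylor expansion.
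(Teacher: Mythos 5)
Your proof is correct and follows essentially the same route as the paper: a contradiction argument, correction of the linearization error via the generalized open mapping theorem so that the (normalized) difference quotients land in $\KK_\eta(\bar x)$, the normal-cone inequalities $\dual{\lambda}{x_k-\bar x}\le 0$ and $\mu^\top\paren{g(x_k)-g(\bar x)}\le 0$ to bound the Lagrangian difference by the objective difference, and a Taylor expansion with remainder uniform over the bounded set $\Lambda(\bar x)$. The only imprecision is minor: the upper bound $f'(\bar x)\,h_k \le \OO(t_k)$ that you need for membership in $\KK_\eta(\bar x)$ does not follow from the Lagrange identity (which, combined with the normal-cone inequalities, only yields a \emph{lower} bound on $f'(\bar x)\,h_k$), but comes directly from the contradiction hypothesis $f(x_k)-f(\bar x)<\tfrac{\tilde\alpha}{2}\,t_k^2$ together with a first-order Taylor expansion of $f$, exactly as in the paper's proof.
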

\begin{proof}
	We fix $\tilde\alpha \in (0,\alpha)$ and proceed by contradiction.
	This yields a sequence $x_n \in F \setminus\{\bar x\}$ with $x_n \to \bar x$
	and $f(x_n) < f(\bar x) + \frac{\tilde\alpha}2 \, \norm{x_n - \bar x}_X^2$.

	Using the Fréchet differentiability of $g$, we have
	\begin{equation*}
		r_n := g(\bar x) + g'(\bar x) \, (x_n - \bar x) - g(x_n)
		=
		\oo(\norm{x_n - \bar x}_X).
	\end{equation*}
	Owing to the CQ and the generalized open mapping theorem
	\cite[Theorem~2.1]{ZoweKurcyusz1979},
	we find sequences $\{h_n\} \subset \TT_C(\bar x)$,
	$\{v_n\} \subset \TT_K(g(\bar x))$
	with
	\begin{equation*}
		-r_n = g'(\bar x) \, h_n - v_n
	\end{equation*}
	and $h_n = \OO(\norm{r_n}_X) = \oo(\norm{x_n - \bar x}_X)$.
	In particular,
	$x_n - \bar x + h_n \in \TT_C(\bar x)$
	and
	\begin{equation*}
		g'(\bar x) \, (x_n - \bar x + h_n) = g(x_n) - g(\bar x) + v_n \in \TT_K(g(\bar x)).
	\end{equation*}
	Further,
	\begin{align*}
		f'(\bar x) \, (x_n - \bar x)
		&=
		f(x_n) - f(\bar x) + \oo(\norm{x_n - \bar x}_X)
		\\
		&\le
		\frac{\tilde\alpha}{2} \, \norm{x_n - \bar x}_X^2 + \oo(\norm{x_n - \bar x}_X)
		=
		\oo(\norm{x_n - \bar x}_X)
	\end{align*}
	yields $f'(\bar x) \, (x_n - \bar x + h_n) = \oo(\norm{x_n - \bar x}_X) = \oo(\norm{x_n - \bar x + h_n}_X)$.
	Hence, $x_n - \bar x + h_n \in \KK_\eta(\bar x)$ for $n$ large enough.

	Now, for large $n$, we choose $(\lambda_n, \mu_n) \in \Lambda(\bar x)$, such that
	\begin{equation*}
		\LL''(\bar x, \lambda_n, \mu_n) \, (x_n - \bar x + h_n)^2
		\ge
		\paren[\Big]{ \alpha  - \frac{\alpha - \tilde\alpha}{2}}\, \norm{x_n - \bar x + h_n}_X^2
		=
		\frac{\alpha+\tilde\alpha}2
		\, \norm{x_n - \bar x + h_n}_X^2
	\end{equation*}
	This is possible since $x_n - \bar x + h_n \ne 0$ for $n$ large enough.

	For $n$ large enough we have
	\begin{align*}
		\frac{\tilde\alpha}{2}  \, \norm{x_n - \bar x}^2
		&>
		f(x_n) - f(\bar x)
		\\&\ge
		f(x_n) - f(\bar x) + \dual{\mu_n}{x_n - \bar x} + \dual{\lambda_n}{g(x_n) - g(\bar x)}
		\\&=
		\LL(x_n, \lambda_n, \mu_n) - \LL(\bar x, \lambda_n, \mu_n)
		.
	\end{align*}
	Next, we are going to use a Taylor expansion of the Lagrangian.
	Since $f$ and $g$ are twice Fréchet differentiable,
	we have the Taylor expansion
	\begin{equation*}
		f(x_n) - f(\bar x) = f'(\bar x) \, (x_n-\bar x) + \frac12 \, f''(\bar x) \, (x_n - \bar x)^2 + \oo(\norm{x_n - \bar x}^2_X)
	\end{equation*}
	and analogously for $g$.
	Now, we utilize that the CQ \eqref{eq:weaker_rzcq} implies the boundedness
	of the multipliers $\Lambda(\bar x)$.
	This yields that we can use a Taylor expansion for
	$\LL(\cdot, \lambda_n, \mu_n)$ at $\bar x$
	and the remainder term is uniform w.r.t.\ the multipliers $(\lambda_n,\mu_n) \in \Lambda(\bar x)$.
	Thus, we can continue with
	\begin{align*}
		\frac{\tilde\alpha}{2}  \, \norm{x_n - \bar x}^2
		&>
		\LL(x_n, \lambda_n, \mu_n) - \LL(\bar x, \lambda_n, \mu_n)
		\\&=
		\LL'(\bar x, \lambda_n, \mu_n) \, (x_n - \bar x)
		+
		\frac12 \, \LL''(\bar x, \lambda_n, \mu_n) \, (x_n - \bar x)^2
		+
		\oo(\norm{x_n - \bar x}_X^2)
		\\&=
		\frac12 \, \LL''(\bar x, \lambda_n, \mu_n) \, (x_n - \bar x + h_n)^2
		-
		\LL''(\bar x, \lambda_n, \mu_n) \, [x_n - \bar x, h_n]
		\\&\qquad
		-
		\frac12 \, \LL''(\bar x, \lambda_n, \mu_n) \, h_n^2
		+
		\oo(\norm{x_n - \bar x}_X^2)
	\end{align*}
	In order to deal with the second and third addend,
	we
	use again the boundedness of $\Lambda(\bar x)$.
	Together with
	$\norm{h_n}_X = \oo(\norm{x_n - \bar x}_X)$,
	both addends belong to $\oo(\norm{x_n-\bar x}_X^2)$ as $n \to \infty$.
	Thus, we can continue via
	\begin{align*}
		\frac{\tilde\alpha}{2}  \, \norm{x_n - \bar x}^2
		&>
		\frac12 \, \LL''(\bar x, \lambda_n, \mu_n) \, (x_n - \bar x + h_n)^2
		+
		\oo(\norm{x_n - \bar x}_X^2)
		\\&\ge
		\frac{\alpha+\tilde\alpha}4 \, \norm{x_n - \bar x + h_n}^2
		+
		\oo(\norm{x_n - \bar x}_X^2)
		.
	\end{align*}
	Dividing by $\norm{x_n - \bar x}_X^2$ and passing to the limit $n \to \infty$
	yields
	the contradiction
	$\tilde\alpha /2 \ge (\alpha+\tilde\alpha)/4$.
\end{proof}
Using the notion of Legendre forms,
it possible to weaken the assumed inequality \eqref{eq:SSC}.
We recall from
\cite[Section~6.2]{IoffeTichomirov1979:2}
that a continuous bilinear form $a : H \times H \to \R$
on a Hilbert space $H$ is called a Legendre form,
if $x \mapsto a(x,x)$ is sequentially weakly lower semicontinuous
and if
\begin{equation*}
	x_n \weakly x
	\quad\text{and}\quad
	a(x_n,x_n) \to a(x,x)
	\qquad\Longrightarrow\qquad
	x_n \to x
	.
\end{equation*}
Clearly, this definition can also be used if $H$
is not a Hilbert space, but only a Banach space.
However, it was shown recently in \cite{Harder2018}
that a reflexive Banach space permits a Legendre form
only if it possesses an equivalent Hilbert space norm.
The notion of Legendre forms was generalized to non-quadratic forms
in \cite[Definition~3.73]{BonnansShapiro2000}.
Therein, a function $q : X \to \R$ is called an
extended Legendre form, if it is weakly lower semicontinuous,
positively homogeneous of degree $2$ and
if
\begin{equation*}
	x_n \weakly x
	\quad\text{and}\quad
	q(x_n) \to q(x)
	\qquad\Longrightarrow\qquad
	x_n \to x
\end{equation*}
is satisfied.
We are interested in the case that
\begin{equation}
	\label{eq:def_q}
	q(h) :=
	\sup_{(\lambda, \mu) \in \Lambda(\bar x)} \LL''(\bar x, \lambda, \mu) \, h^2
	=
	f''(\bar x) \, h^2 +
	\sup_{(\lambda, \mu) \in \Lambda(\bar x)} \sum_{i=1}^m \mu_i \, g''(\bar x) \, h^2
\end{equation}
is the maximized Hessian of the Lagrangian.
Under the assumption that the set of multipliers $\Lambda(\bar x)$
is bounded, which holds, e.g., under \eqref{eq:weaker_rzcq},
and non-empty,
the function $q$ is finite, i.e., it maps $X$ to $\R$.
The next results states necessary conditions
which ensure that a sum of two functions is an extended Legendre form.
It is inspired by \cite[Proposition~3.76~(ii)]{BonnansShapiro2000}.
\begin{lemma}
	\label{lem:ex_leg_form}
	Suppose that $q_1 : X \to \R$ is an extended Legendre form
	and that $q_2 : X \to \R$ is positively homogeneous of degree $2$
	and weakly lower semicontinuous.
	Then, $q := q_1 + q_2$ is an extended Legendre form.
\end{lemma}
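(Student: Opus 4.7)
The plan is to verify the three defining properties of an extended Legendre form for $q := q_1 + q_2$ in turn. The first two are essentially automatic: weak lower semicontinuity of $q$ follows since a sum of two weakly l.s.c.\ functions is weakly l.s.c., and positive homogeneity of degree $2$ is preserved under sums. So the entire substance of the proof lies in verifying the third property: that $x_n \weakly x$ together with $q(x_n) \to q(x)$ forces $x_n \to x$ strongly.

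My strategy for this third property is to reduce it to the analogous property already known for $q_1$. The idea is to show that under the hypotheses $x_n \weakly x$ and $q(x_n) \to q(x)$, we actually have $q_1(x_n) \to q_1(x)$, so that the extended Legendre property of $q_1$ yields $x_n \to x$. To this end, I would first note the easy bound $\liminf_{n\to\infty} q_1(x_n) \ge q_1(x)$, which follows from the weak lower semicontinuity of $q_1$. For the reverse bound, I would write $q_1(x_n) = q(x_n) - q_2(x_n)$ and pass to the limit superior:
\begin{equation*}
	\limsup_{n\to\infty} q_1(x_n)
	= \lim_{n\to\infty} q(x_n) - \liminf_{n\to\infty} q_2(x_n)
	\le q(x) - q_2(x) = q_1(x),
\end{equation*}
where the inequality uses the assumed convergence of $q(x_n)$ and the weak lower semicontinuity of $q_2$. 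Combined with the liminf inequality, this gives $q_1(x_n) \to q_1(x)$.

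The main (and really the only) subtlety is that one needs $q_2$ to be finite-valued along the sequence so that the subtraction and the inequality $\limsup(a_n - b_n) \le \lim a_n - \liminf b_n$ are meaningful; this is guaranteed since $q_2 : X \to \R$ by hypothesis. Once $q_1(x_n) \to q_1(x)$ is established, I would invoke the defining implication for the extended Legendre form $q_1$ directly to conclude $x_n \to x$, completing the verification.
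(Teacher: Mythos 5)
Your proposal is correct and follows essentially the same route as the paper: both reduce the problem to showing $q_1(x_n) \to q_1(x)$ via the sandwich of $\liminf$/$\limsup$ inequalities (weak lower semicontinuity of $q_1$ for the lower bound, subtracting the weakly l.s.c.\ term $q_2$ from the convergent sequence $q(x_n)$ for the upper bound) and then invoke the extended Legendre property of $q_1$. The paper merely packages the two bounds into a single chain of inequalities starting and ending at $q(x)$, whereas you state them separately; the substance is identical.
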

\begin{proof}
	It is clear that $q$ is positively homogeneous of degree $2$
	and weakly lower semicontinuous.
	Now, suppose that $x_n \weakly x$ and $q(x_n) \to q(x)$.
	From
	\begin{align*}
		q(x)
		&=
		\lim_{n \to \infty} q(x_n)
		=
		\limsup_{n \to \infty} \paren[\big]{ q_1(x_n) + q_2(x_n) }
		\ge
		\limsup_{n \to \infty} q_1(x_n) + \liminf_{n \to \infty} q_2(x_n)
		\\
		&\ge
		\liminf_{n \to \infty} q_1(x_n) + \liminf_{n \to \infty} q_2(x_n)
		\ge
		q_1(x) + q_2(x)
		=
		q(x)
	\end{align*}
	we infer $q_1(x_n) \to q_1(x)$.
	Since $q_1$ is an extended Legendre form,
	$x_n \to x$ follows.
	This shows that $q$ is an extended Legendre form.
\end{proof}
The next result is an adaption of \cite[Proposition~3.77]{BonnansShapiro2000}
to the situation at hand.
\begin{lemma}
	\label{lem:ext_leg_2}
	Let $\bar x$ be a feasible point
	such that
	$\Lambda(\bar x)$ is not empty and bounded.
	Further, we assume that $f''(\bar x)$ is a Legendre form and that
	\begin{itemize}
		\item
			$h \mapsto g_i''(\bar x) \, h^2$ is weakly continuous for all $i = 1,\ldots, m_1$ and
		\item
			$h \mapsto g_i''(\bar x) \, h^2$ is weakly lower semicontinuous for all $i \in I_0(\bar x) \setminus \{1,\ldots,m_1\}$
			.
	\end{itemize}
	Then,
	the function $q$ defined in \eqref{eq:def_q}
	is an extended Legendre form.
\end{lemma}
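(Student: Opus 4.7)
The plan is to apply \cref{lem:ex_leg_form} to the natural splitting $q = q_1 + q_2$ suggested by \eqref{eq:def_q}, where $q_1(h) := f''(\bar x)\,h^2$ and
\begin{equation*}
	q_2(h) := \sup_{(\lambda,\mu)\in\Lambda(\bar x)} \sum_{i=1}^m \mu_i \, g_i''(\bar x)\,h^2.
\end{equation*}
By assumption $q_1$ is a Legendre form, which is in particular an extended Legendre form, so the task reduces to showing that $q_2 \colon X \to \R$ is well-defined, positively homogeneous of degree two, and weakly lower semicontinuous.

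Well-definedness and homogeneity are quick bookkeeping. Boundedness of $\Lambda(\bar x)$ together with the continuity of each $g_i''(\bar x)$ supplies a uniform quadratic upper bound on the expression under the supremum, so $q_2(h) \in \R$ for every $h \in X$. Since each summand $\mu_i\,g_i''(\bar x)\,h^2$ is itself a quadratic form in $h$, the factor $t^2$ can be pulled out of the supremum when $h$ is replaced by $t\,h$ with $t \ge 0$, giving positive homogeneity of degree two.

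The substantive step is weak lower semicontinuity of $q_2$. Since a pointwise supremum of weakly lsc functions is again weakly lsc, it suffices to show that for each fixed $(\lambda,\mu) \in \Lambda(\bar x)$ the quadratic form $h \mapsto \sum_{i=1}^m \mu_i\,g_i''(\bar x)\,h^2$ is weakly lsc. Here I would split the (finite) sum according to index type: inactive indices $i \in I_-(\bar x)$ drop out thanks to complementary slackness built into \eqref{eq:mult}; equality indices $i \in \{1,\ldots,m_1\}$ contribute weakly continuous terms by the first bullet of the hypothesis, regardless of the sign of $\mu_i$; and for active inequality indices $i \in I_0(\bar x) \setminus \{1,\ldots,m_1\}$ the condition $\mu \in \NN_K(g(\bar x))$ pins down the sign of $\mu_i$ so that $\mu_i\,g_i''(\bar x)\,h^2$ is a nonnegative multiple of the weakly lsc form provided by the second bullet, hence still weakly lsc. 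Summing these finitely many weakly lsc summands gives the claim for each multiplier, whence $q_2$ is weakly lsc.

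The only real obstacle is the careful unpacking of signs and active sets: one has to check that on active inequality indices the orientation of $\mu_i$ inherited from the normal cone matches lower (rather than upper) semicontinuity, and that inactive indices contribute nothing thanks to complementary slackness. Modulo this bookkeeping, \cref{lem:ex_leg_form} applied to $q_1$ and $q_2$ produces the desired conclusion that $q$ is an extended Legendre form.
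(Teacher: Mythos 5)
Your proposal is correct and follows essentially the same route as the paper: decompose $q = q_1 + q_2$ with $q_1 = f''(\bar x)\,(\cdot)^2$, show that each $h \mapsto \sum_{i} \mu_i\,g_i''(\bar x)\,h^2$ is weakly lower semicontinuous using the sign of $\mu_i$ on active inequality indices, pass to the supremum, and invoke \cref{lem:ex_leg_form}. Your additional bookkeeping on well-definedness and on inactive indices is a harmless elaboration of what the paper leaves implicit.
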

\begin{proof}
	For every $(\lambda,\mu) \in \Lambda(\bar x)$,
	the function $h \mapsto q_2^{(\mu)}(h) := \sum_{i = 1}^m \mu_i \, g_i''(\bar x)\,h^2$
	is weakly lower semicontinuous, since $\mu_i \ge 0$ for $i \in I_0(\bar x) \setminus \{1,\ldots,m_1\}$.
	Moreover, these functions are positively $2$-homogeneous.
	As the supremum of weakly lower semicontinuous functions,
	the function
	\begin{equation*}
		h \mapsto q_2(h) := \sup_{(\lambda,\mu) \in \Lambda(\bar x)}\sum_{i = 1}^m \mu_i \, g_i''(\bar x)\,h^2
	\end{equation*}
	is weakly lower semicontinuous.
	Now, an application of \cref{lem:ex_leg_form} yields the assertion.
\end{proof}

From \cite[Lemma~3.75]{BonnansShapiro2000},
we obtain the following result.
\begin{lemma}
	\label{lem:leg_form_coerc}
	Let $X$ be a reflexive Banach space.
	Suppose that \eqref{eq:weaker_rzcq} is satisfied at the feasible point $\bar x$
	and that $\Lambda(\bar x)$ is not empty.
	We further assume that
	\begin{equation*}
		h \mapsto \sup_{(\lambda, \mu) \in \Lambda(\bar x)} \LL''(\bar x, \lambda, \mu) \, h^2
	\end{equation*}
	is an extended Lagrange form.
	Then, the condition \eqref{eq:SSC} is equivalent to
	\begin{equation}
		\label{eq:SSC_positive}
		\sup_{(\lambda, \mu) \in \Lambda(\bar x)} \LL''(\bar x, \lambda, \mu) \, h^2
		>
		0
		\qquad
		\forall h \in \KK_0(\bar x) = \KK(\bar x).
	\end{equation}
\end{lemma}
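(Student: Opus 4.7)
The implication \eqref{eq:SSC} $\Rightarrow$ \eqref{eq:SSC_positive} is immediate: if $\alpha, \eta > 0$ witness \eqref{eq:SSC}, then $\KK(\bar x) \subset \KK_\eta(\bar x)$, since any $h \in \KK(\bar x)$ satisfies $f'(\bar x) \, h \le 0 \le \eta \, \norm{h}_X$, so $q(h) \ge \alpha \, \norm{h}_X^2 > 0$ for every nonzero $h \in \KK(\bar x)$. The substance is the converse, which I would prove by contradiction, following the scheme of \cite[Lemma~3.75]{BonnansShapiro2000}.

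Assume \eqref{eq:SSC_positive} holds but \eqref{eq:SSC} fails. Negating \eqref{eq:SSC} for the choices $\alpha = \eta = 1/n$ yields a sequence $\{h_n\} \subset \KK_{1/n}(\bar x)$ with $q(h_n) < \frac{1}{n}\,\norm{h_n}_X^2$ and $h_n \ne 0$. Normalizing so that $\norm{h_n}_X = 1$ and invoking reflexivity of $X$, I extract a subsequence with $h_n \weakly h$ for some $h \in X$. The plan is to check that $h \in \KK(\bar x)$ and then reach a contradiction either from \eqref{eq:SSC_positive} or from the Legendre form property.

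To see $h \in \KK(\bar x)$: the cone $\TT_C(\bar x)$ is closed and convex, hence weakly closed, so $h \in \TT_C(\bar x)$; the operator $g'(\bar x)$ is bounded linear into the finite-dimensional space $\R^m$, so $g'(\bar x)\,h_n \to g'(\bar x)\,h$ in $\R^m$, and closedness of $\TT_K(g(\bar x))$ yields $g'(\bar x)\,h \in \TT_K(g(\bar x))$; finally, $f'(\bar x)\,h_n \to f'(\bar x)\,h$ with $f'(\bar x)\,h_n \le 1/n$, so $f'(\bar x)\,h \le 0$. The defining inequalities $q(h_n) < 1/n$ combined with 2-homogeneity and weak lower semicontinuity of $q$ yield
\begin{equation*}
	0 = q(0) \le \liminf_{n \to \infty} q(h_n) \le \limsup_{n \to \infty} q(h_n) \le 0
	\quad\text{when } h = 0,
\end{equation*}
and more generally $q(h) \le \liminf q(h_n) \le 0$.

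If $h \ne 0$, then $h$ is a nonzero element of $\KK(\bar x)$ with $q(h) \le 0$, directly contradicting \eqref{eq:SSC_positive}. If $h = 0$, then the display above gives $q(h_n) \to 0 = q(h)$ together with $h_n \weakly h$; the extended Legendre form property then forces $h_n \to 0$ strongly, contradicting $\norm{h_n}_X = 1$. The main subtle point is ensuring one actually has $q(h_n) \to q(h)$ in the case $h = 0$, which is why the lower bound $\liminf q(h_n) \ge q(0) = 0$ from weak lower semicontinuity is essential; everything else is essentially bookkeeping on the cones.
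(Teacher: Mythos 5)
Your proof is correct and follows exactly the route the paper intends: the paper gives no proof of its own but simply cites \cite[Lemma~3.75]{BonnansShapiro2000}, and your contradiction argument (normalize, extract a weak limit by reflexivity, show the limit is critical, then split into the cases $h \ne 0$ and $h = 0$ using weak lower semicontinuity and the Legendre-form property respectively) is precisely the standard scheme of that cited lemma. The only detail worth making explicit is that the hypotheses that $\Lambda(\bar x)$ is nonempty and bounded (via \eqref{eq:weaker_rzcq}) are what guarantee $q$ is finite-valued, which you use implicitly throughout.
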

In this case, we have a minimal gap between the necessary and sufficient conditions
of \cref{thm:SNC,thm:sufficient_condition}.

\section{Examples}
\label{sec:examples}

In this section,
we provide two examples.
These examples illustrate
two crucial ingredients of \cref{thm:SNC}.

The first example is constructed in such a way that
the assumptions of
\cref{thm:SNC} are satisfied
and, hence, the necessary conditions \eqref{eq:snc}
hold.
However,
the set of multipliers $\Lambda(\bar x)$ is not a singleton
and
the condition
\begin{equation*}
	\LL''(\bar x, \lambda, \mu) \, h^2 \ge 0
	\qquad
	\forall h \in \KK(\bar x)
\end{equation*}
is violated for all $(\lambda, \mu) \in \Lambda(\bar x)$.
Hence, it is crucial to take the supremum over all multipliers in \eqref{eq:snc}.

In the other example,
we demonstrate that the assumption that
$C$ is $(\hat m+1)$-polyhedric
is crucial.
To this end, we have to use
a polyhedric set
which is not $2$-polyhedric.

\subsection{Non-unique multipliers}
This example is heavily inspired by
\cite[Counterexample~1.2]{CrouzeixMartinezLegazSeeger1995}.
We repeat this counterexample,
since it will be important in the sequel.
We define the matrices
\begin{equation*}
	A_1 :=
	\begin{pmatrix}
		1 & 1 \\ 1 & -1
	\end{pmatrix}
	,
	\qquad
	A_2 :=
	\begin{pmatrix}
		-2 & 1 \\ 1 & 1
	\end{pmatrix}
	.
\end{equation*}
These matrices have the property that
\begin{equation}
	\label{eq:crazy_matrix_property}
	\max\{ x^\top A_1 \, x, x^\top A_2 \, x\} \ge \frac12 \, \norm{x}^2_{\R^2}
	\qquad \forall x \in \R^2 , x \ge 0
	.
\end{equation}
Indeed, this can be shown, e.g., by a distinction of the cases
$x_1 \ge x_2$ and $x_2 \ge x_1$.
However, for every $\lambda \in [0,1]$,
the convex combination
\begin{equation*}
	B_\lambda := \lambda \, A_1 + (1-\lambda) \, A_2
	=
	\begin{pmatrix}
		-2 + 3 \, \lambda & 1 \\
		1 & 1 - 2 \, \lambda
	\end{pmatrix}
\end{equation*}
is not coercive on non-negative vectors, since
at least one of the numbers
\begin{equation*}
	e_1^\top B_\lambda \, e_1 = -2 + 3 \,\lambda,
	\qquad
	e_2^\top B_\lambda \, e_2 = 1 - 2 \,\lambda
\end{equation*}
will be negative.

We are going to construct a problem of the form
\begin{equation}
	\label{eq:example1}
	\begin{aligned}
		\text{Minimize}\quad & f(x) \\
		\text{such that}\quad & x \in C, \\
		\text{and}\quad & g(x) = 0.
	\end{aligned}
\end{equation}
Here,
$f, g \colon L^2(0,1) \to \R$
are (continuous) quadratic functions
to be defined below and
\begin{equation*}
	C = \set{ u \in L^2(0,1) \given -1 \le u \le 1 \; \text{a.e.\ on } I}.
\end{equation*}
Our point of interest will be $\bar x \in C$ defined via
\begin{equation*}
	\bar x(t)
	=
	\begin{cases}
		-1 & \text{if } t \in (0,1/3), \\
		-3 + 6 \, t & \text{if } t \in (1/3,2/3), \\
		1 & \text{if } t \in (2/3,1).
	\end{cases}
\end{equation*}
It is clear that
\begin{align*}
	\TT_{C}(\bar x)
	&=
	\set[\big]{ v \in L^2(0,1) \given v \ge 0 \text{ a.e.\ on } (0,1/3) \text{ and } v \le 0 \text{ a.e.\ on } (2/3,1) }
	,
	\\
	\NN_{C}(\bar x)
	&=
	\set*{ v \in L^2(0,1) \given
		\begin{aligned}
			&v \le 0 \text{ a.e.\ on } (0,1/3),\; v = 0 \text{ a.e.\ on } (1/3,2/3) \text{ and } \\
			&v \ge 0 \text{ a.e.\ on } (2/3,1)
		\end{aligned}
	}
	.
\end{align*}
The function $g$ will satisfy
\begin{equation*}
	g(\bar x) = 0,
	\qquad
	g'(\bar x) = \chi_{(0,1/4)} + \chi_{(2/3,3/4)}.
\end{equation*}
The first conditions renders $\bar x$ feasible for \eqref{eq:example1}.
Due to
\begin{equation*}
	\RR_C(\bar x)
	\supset
	\set*{ v \in L^\infty(0,1) \given
		\begin{aligned}
			&v \ge 0 \text{ a.e.\ on } (0,1/3),\; v = 0 \text{ a.e.\ on } (1/3,2/3) \text{ and } \\
			&v \le 0 \text{ a.e.\ on } (2/3,1)
		\end{aligned}
	}
\end{equation*}
it is easy to check that
\begin{equation*}
	g'(\bar x) \, \RR_C(\bar x) = \R,
\end{equation*}
thus, \eqref{eq:rzcq} is satisfied.
Next, we require
\begin{equation*}
	f'(\bar x)
	=
	-\chi_{(2/3, 3/4)}
\end{equation*}
and we compute the set of Lagrange multipliers $\Lambda(\bar x)$.
This amounts to find all $\mu \in \R$, such that the corresponding $\lambda$
satisfies
\begin{equation*}
	\lambda = -f'(\bar x) - \mu \, g'(\bar x)
	\in
	\NN_C(\bar x)
	.
\end{equation*}
By using the formula for the normal cone,
we see that this is equivalent to $\mu \in [0,1]$.
Thus $\bar x$ is a stationary point
and
\begin{equation*}
	\Lambda(\bar x)
	=
	\set{ (-f'(\bar x) - \mu \, g'(\bar x), \mu) \given \mu \in [0,1] }
	.
\end{equation*}
Note that the critical cone
$\KK(\bar x) = \TT_C(\bar x) \cap f'(\bar x)\anni$
is given by
\begin{equation*}
	\KK(\bar x)
	=
	\set*{ v \in L^2(0,1) \given
		\begin{aligned}
			&v \ge 0 \text{ a.e.\ on } (0,1/3),\; v = 0 \text{ a.e.\ on } (2/3,3/4) \text{ and } \\
			&v \le 0 \text{ a.e.\ on } (3/4,1)
		\end{aligned}
	}
	.
\end{equation*}
Next,
we define the second derivatives
of $f$ and $g$ at $\bar x$.
To this end, we use the notation
\begin{equation*}
	\fint_a^b x \, \dt
	:=
	\frac1{b-a} \, \int_a^b x \, \dt,
	\qquad
	Z_a^b(x) := x - \fint_a^b x \, \dt
\end{equation*}
for the average of a function $x$ over an interval $(a,b)$
and for the difference of the function with this average.
With this notation,
we introduce
\begin{align*}
	f''(\bar x)[h_1,h_2]
	&:=
	\int_0^{1/3} Z_0^{1/3}(h_1) \, Z_0^{1/3}(h_2) \, \dt
	+
	\int_{1/3}^{3/4} h_1 \, h_2 \, \dt
	+
	\int_{3/4}^1 Z_{3/4}^1(h_1) \, Z_{3/4}^1(h_2) \, \dt
	\\&\qquad
	+
	2 \,
	\begin{pmatrix}
		\fint_0^{1/3} h_1 \, \dt
		\\
		-\fint_{3/4}^{1} h_1 \, \dt
	\end{pmatrix}^\top
	A_1
	\begin{pmatrix}
		\fint_0^{1/3} h_2 \, \dt
		\\
		-\fint_{3/4}^{1} h_2 \, \dt
	\end{pmatrix}
	\\
	g''(\bar x)[h_1,h_2]
	&:=
	2 \,
	\begin{pmatrix}
		\fint_0^{1/3} h_1 \, \dt
		\\
		-\fint_{3/4}^{1} h_1 \, \dt
	\end{pmatrix}^\top
	(A_2 - A_1)
	\begin{pmatrix}
		\fint_0^{1/3} h_2 \, \dt
		\\
		-\fint_{3/4}^{1} h_2 \, \dt
	\end{pmatrix}
	.
\end{align*}

Note that the quadratic functions $f$ and $g$
are
uniquely determined
via the first and second derivatives in $\bar x$
and the requirement $f(\bar x) = g(\bar x) = 0$.
Let us check that
the second-order sufficient condition \eqref{eq:SSC}
is satisfied.
For $h \in \TT_C(\bar x)$
we set
$\hat h := (\fint_0^{1/3} h_2 \, \dt, -\fint_{3/4}^{1} h_2 \, \dt) \ge 0$.
By utilizing \eqref{eq:crazy_matrix_property},
we have
\begin{align*}
	\sup_{(\lambda, \mu) \in \Lambda(\bar x)} \LL''(\bar x, \lambda, \mu)\,h^2
	&=
	\sup_{\mu \in [0,1]} f''(\bar x) \, h^2 + \mu \, g''(\bar x) \, h^2
	\\
	&=
	\int_0^{1/3} Z_0^{1/3}(h)^2 \, \dt
	+
	\int_{1/3}^{3/4} h^2 \, \dt
	+
	\int_{3/4}^1 Z_{3/4}^1(h)^2 \, \dt
	\\&\qquad\qquad
	+
	2\,\max( \hat h^\top A_1 \, \hat h, \hat h^\top A_2 \, \hat h )
	\\
	&\ge
	\int_0^{1/3} Z_0^{1/3}(h)^2 \, \dt
	+
	\int_{1/3}^{3/4} h^2 \, \dt
	+
	\int_{3/4}^1 Z_{3/4}^1(h)^2 \, \dt
	+
	\norm{\hat h}^2_{\R^2}
	\\
	&=
	\int_0^1 h^2 \, \dt
	=
	\norm{h}_{L^2(0,1)}^2.
\end{align*}
Hence, 
\cref{thm:sufficient_condition} implies
that $\bar x$ is a local minimizer.

It remains to check that the condition
\begin{equation}
	\label{eq:false_SNC}
	\LL''(\bar x, \lambda, \mu) \, h^2 \ge 0
	\qquad
	\forall h \in \KK(\bar x).
\end{equation}
is violated for all $(\lambda, \mu) \in \Lambda(\bar x)$.
To this end, we take
\begin{equation*}
	h_1 := \chi_{(0,1/3)},
	\qquad
	h_2 := -\chi_{(3/4,1)}
\end{equation*}
and observe $h_1, h_2 \in \KK(\bar x)$.
It is easy to check that
\begin{equation*}
	f''(\bar x) \, h_i^2 = 2 \, e_i^\top A_1 \, e_i
	,
	\quad
	g''(\bar x) \, h_i^2 = 2 \, e_i^\top (A_2 - A_1) \, e_i
\end{equation*}
for $i = 1, 2$.
Thus, for every $\mu \in [0,1]$ and the associated $\lambda$,
we have
\begin{equation*}
	\LL''(\bar x, \lambda, \mu) \, h_1^2 = 1 - 3 \, \mu,
	\qquad
	\LL''(\bar x, \lambda, \mu) \, h_2^2 = -1 + 2 \, \mu
\end{equation*}
and these two terms cannot be simultaneously non-negative.
Hence,
there does not exist any multiplier $(\lambda, \mu) \in \Lambda(\bar x)$
such that \eqref{eq:false_SNC} holds.
This means that \eqref{eq:false_SNC}
fails to be a necessary optimality condition.

\subsection{Constraint set which is not \texorpdfstring{$2$}{2}-polyhedric}
Next, we give a counterexample
to demonstrate that the assumption of $C$ being $(\hat m+1)$-polyhedric
in \cref{thm:SNC}
is crucial.
Therefore, we need a set which is polyhedric (i.e., $1$-polyhedric), but not $2$-polyhedric.
To the best of our knowledge,
the set given in \cite[Example~4.24]{Wachsmuth2016:2}
is the only known set with this property.
In order to state our counterexample,
we need to adapt the construction from \cite[Example~4.24]{Wachsmuth2016:2}.
In $\R^3$ we consider the points
\begin{align*}
	O   & := (0,0,0),  \\
	P_n & := (n^{-3}, n^{-2}, -n^{-4}), & n & \in \N, \\
	Q_n & := (-(n/\gamma)^{-3}, (n/\gamma)^{-2}, 0),      & n & \in \N,
\end{align*}
where $\gamma = (1 + \sqrt{3})/2$.
We set
\begin{equation*}
	C :=
	\conv
	\paren{
		\{O\}
		\cup
		\{P_n, Q_n\}_{n \in \N}
	}.
\end{equation*}
Since the sequences $\{P_n\}$ and $\{Q_n\}$ converge towards $O$,
the set $C$ is closed.
In what follows, we check that $C$ is polyhedric.
By arguing as in \cite[Example~4.24]{Wachsmuth2016:2},
we find that $C$ is polyhedric in $O$.
Next, it is a little bit tedious to check
that $C$ is the intersection of the half-spaces
which are defined by the following inequalities
and that the points on the right-hand side are exactly those points
of $O$, $P_n$, $Q_n$
which lie on the boundary of the half-spaces:
\begin{align*}
	x^\top (0, 0, 1)                    &\le 0,  && O, Q_n \quad \forall n \in \N, \\
	x^\top (1, \gamma, 1+\gamma)        &\ge 0,  && O, Q_1, P_1, \\
	x^\top (2\,k+1, -1, k\,(k+1)) &\le 0,  && O, P_k, P_{k+1}, \\
	x^\top ( a_k^{(1)}, b_k^{(1)}, c_k^{(1)}) & \le \gamma^3, && Q_k, Q_{k+1}, P_k, \\
	x^\top (a_k^{(2)}, b_k^{(2)}, c_k^{(2)}) & \le 1, && P_k, P_{k+1}, Q_{k+1},
\end{align*}
where $k \in \N$.
In the last two lines, we have used the coefficients
\begin{align*}
	a_k^{(1)} &:= k \, (k+1) \, (2\,k+1),
	&
	b_k^{(2)} &:= \frac{\gamma^3 \, \paren[\big]{(k+1)^4-k^4} + (k+1)^3 }{\gamma^3 \, (2\,k+1) + \gamma^2 \, (k+1)},
	\\
	b_k^{(1)} &:= \gamma \, (3\,k^2+3\,k+1),
	&
	a_k^{(2)} &:= (k+1)^4-k^4  - (2 \, k + 1) \, b_k^{(2)},
	\\
	c_k^{(1)} &:= k \, a_k^{(1)} + k^2 \, b_k^{(1)} - \gamma^3\,k^4,
	&
	c_k^{(2)} &:= k \, a_k^{(2)} + k^2 \, b_k^{(2)} - k^4.
\end{align*}
From this representation of $C$, we learn two things.
First, all $P_k$, $Q_k$ are extreme points of $C$ and, thus, $C$ is not polyhedral.
Second, 
the intersection $C \cap \{x \in \R^3 \mid x^\top (1,1,1) \ge \varepsilon\}$
is a polyhedron for all $\varepsilon > 0$,
since it can be written as a finite intersection of half-spaces.
Thus, $\RR_C(x)$ is closed
for all $x \in C \setminus \{O\}$.
Hence, $C$ is polyhedric at all $x \in C \setminus \set{O}$.

Hence, we have shown that $C$ is polyhedric, but not polyhedral.
As in \cite[Example~4.24]{Wachsmuth2016:2}, we can also check that $C$ is not $2$-polyhedric.

Next, we compute the intersection of $C$
with the hyperplane $x^\top (1,0,0) = 0$.
To this end, let $R_{k,n}$ be the intersection of this hyperplane
with the line segment joining $P_k$ and $Q_n$, i.e.,
\begin{equation*}
	R_{k,n}
	=
	(0, \lambda_{k,n} \, k^{-2} + (1-\lambda_{k,n}) \, (n/\gamma)^{-2}, -\lambda_{k,n} k^{-4}),
	\qquad
	\lambda_{k,n} = \frac1{1+\paren{\frac{n}{k\,\gamma}}^3}
	.
\end{equation*}
One can check that
\begin{equation*}
	C \cap (1,0,0)\anni
	=
	\conv\paren{ \{O\} \cup \{R_{k,n}\}_{k,n\in \N}}
	.
\end{equation*}
We define
\begin{equation*}
	\delta = \frac{\gamma^3 + 1}{\gamma \, (\gamma + 1)^2}
\end{equation*}
and claim that all points $R_{k,n}$ belong to the convex set
\begin{equation*}
	M
	:=
	\set{(0, x_2, x_3) \in \R^3 \given x_2 \ge 0 \text{ and } x_3 \le -\delta \, x_2^2 }
\end{equation*}
and that the points $R_{n,n}$ belong to the relative boundary of this set.
Indeed,
after a straightforward manipulation,
this claim is equivalent to the inequality
\begin{equation*}
	\frac{\gamma^3 \, k^3 + n^3}{\gamma^3 + 1}
	-
	\frac{k \, (\gamma \, k + n)^2}{(\gamma + 1)^2}
	\ge
	0
	\qquad\forall k,n \in \N
\end{equation*}
and that we have equality for $k = n$.
This latter equality is clear.
Moreover, one can check that
for $n \ge k$ the derivative w.r.t.\ $n$
and
for $k \ge n$ the derivative w.r.t.\ $k$
of the left-hand side is non-negative,
both by using the definition of $\gamma$.


Now, we consider the optimization problem
\begin{align*}
	\text{Minimize} \quad & f(x) = -\delta \, x_2^2 - x_3, \\
	\text{such that} \quad & x \in C, \\
	\text{and} \quad & x_1 = 0.
\end{align*}
In order to cast this problem in the form \eqref{eq:prob2},
we set $g(x) = x_1$ and $K = \{0\}$.
The feasible set of this problem is
$C \cap (1,0,0)\anni$
and this set is contained in $M$.
Hence,
\begin{equation*}
	-\delta \, x_2^2 - x_3 \ge 0
	\qquad\forall x \in M
\end{equation*}
shows that
$\bar x = (0,0,0)$ is a local minimizer
of the above problem.
Since $P_1 \in C$ has a positive $x_1$-coordinate
and since $Q_1 \in C$ has a negative $x_1$-coordinate,
it is easy to check that \eqref{eq:rzcq},
i.e.,
\begin{equation*}
	(1,0,0) \, \RR_C(\bar x)
	=
	\R^1
\end{equation*}
is satisfied.
Hence, there exist $\lambda \in \NN_C(\bar x)$, $\mu \in \R$
such that
the necessary condition from \cref{thm:fonc},
i.e.,
\begin{equation*}
	\begin{pmatrix}
		0 \\ 0 \\ -1
	\end{pmatrix}
	+
	\lambda
	+
	\mu \,
	\begin{pmatrix}
		1 \\ 0 \\ 0
	\end{pmatrix}
	=
	\begin{pmatrix}
		0 \\ 0 \\ 0
	\end{pmatrix}
\end{equation*}
is satisfied.
Finally, we check that the necessary optimality condition of second order \eqref{eq:snc}
does not hold.
Since the constraint $g$ is linear,
its second derivative vanishes and the precise value of the multiplier $\mu$
is irrelevant.
Next, we construct an element of
the tangent cone $\TT_C(\bar x)$.
From
\begin{equation*}
	n^2 \, (P_n - \bar x)
	=
	(n^{-1}, 1, -n^{-2})
	\to
	(0,1,0)
\end{equation*}
we find that $h := (0,1,0) \in \TT_C(\bar x)$.
Moreover,
$f'(\bar x) \, h = 0$
and
$g'(\bar x) \, h = 0$
are clear.
Thus, $h$ belongs to the critical cone $\KK(\bar x)$,
cf.\ \eqref{eq:critical}.
However,
\begin{equation*}
	\sup_{(\lambda, \mu) \in \Lambda(\bar x)} \LL''(\bar x, \lambda, \mu) \, h^2 \ge 0
	=
	f''(\bar x) \, h^2
	=
	- 2 \, \delta
\end{equation*}
is negative.
Hence, \eqref{eq:snc} is violated.

We mention that the only assumption of \cref{thm:SNC}
which does not hold
is the assumption that $C$ is $2$-polyhedric.
Hence, this assumption is essential.
On the other hand,
in the context of \cite{BonnansZidani1999},
the only assumption which might not hold
is the satisfaction of the regularity condition \eqref{eq:regularity_bonnans}.
We check that this condition indeed fails.
To this end, we start by computing the set of multipliers.
It is clear that $(\lambda, \mu)$ are multipliers at $\bar x$,
if and only if
the two conditions
\begin{equation*}
	\lambda
	=
	\mu \,
	\begin{pmatrix}
		-1 \\ 0 \\ 0
	\end{pmatrix}
	+
	\begin{pmatrix}
		0 \\ 0 \\ 1
	\end{pmatrix}
	\in
	\NN_C(\bar x)
\end{equation*}
hold.
Due to the construction of the set $C$
and due to $\bar x = (0,0,0)$,
we have
\begin{align*}
	\NN_C(\bar x)
	&=
	\set{ P_n, Q_n \given n \in \N }\polar
	\\
	&=
	\set*{
		\begin{pmatrix}
			n^{-3} \\ n^{-2} \\ -n^{-4}
		\end{pmatrix}
		,
		\begin{pmatrix}
			-(n/\gamma)^{-3} \\ (n/\gamma)^{-2} \\ 0
		\end{pmatrix}
	\given n \in \N }\polar
	=
	\set*{
		\begin{pmatrix}
			n \\ n^2 \\ -1
		\end{pmatrix}
		,
		\begin{pmatrix}
			-\gamma \\ 1 \\ 0
		\end{pmatrix}
		,
		\begin{pmatrix}
			0 \\ 1 \\ 0
		\end{pmatrix}
	\given n \in \N }\polar
	.
\end{align*}
Hence, $\mu \in \R$ has to satisfy the inequalities
\begin{align*}
	-n \, \mu - 1 &\le 0 \qquad \forall n \in \N,
	&
	\gamma \, \mu &\le 0,
	&
	0 &\le 0.
\end{align*}
Hence, $\mu = 0$ and $\lambda = (0,0,1)^\top$
are the unique Lagrange multipliers for $\bar x$.
Finally,
the regularity condition \eqref{eq:regularity_bonnans}
is violated, since
\begin{equation*}
	g'(\bar x) \, \paren[\big]{ \RR_C(\bar x) \cap \lambda\anni } - \RR_K(g(\bar x)) \cap \mu\anni
	=
	(-\infty, 0]
	\ne
	\R
	.
\end{equation*}
This example also shows that assuming \eqref{eq:regularity_bonnans} in
\cite{BonnansZidani1999}
cannot be replaced by the assumption of unique multipliers.

\section{Conclusions}
We have investigated problem \eqref{eq:prob}
featuring an abstract constraint $x \in C$
and
finitely many nonlinear constraints $g(x) \in K$.
Previously,
second-order necessary optimality conditions
have been obtained under the rather strong regularity condition
\eqref{eq:regularity_bonnans}.
We propose to use
the concept of $n$-polyhedricity of $C$
as a novel approach for deriving second-order necessary conditions.
In fact, ``almost all'' sets which are known to be polyhedric
are even $n$-polyhedric, see, e.g., \cite[Example~4.21]{Wachsmuth2016:2}.
This allows us to prove second-order necessary conditions
under the assumption of the CQ of Robinson, Zowe and Kurcyusz.
Second-order sufficient conditions can be obtained by the usual
contradiction argument.
By means of two counterexamples,
we have seen that the assumptions
and the formulation of \cref{thm:SNC}
is sharp.
The inclusion of the phenomenon of
two-norms discrepancy
is subject to future research.
It would also be interesting
to replace the finite-dimensional polyhedral cone $K$
by a set involving curvature,
e.g., the cone of semi-definite matrices.

\printbibliography
\end{document}